\documentclass[12pt]{amsart}

\usepackage{amsmath,amssymb}
\usepackage{graphicx}
\usepackage{amsthm}
\usepackage[colorlinks=true]{hyperref}

\newcommand{\R}{\mathbb{R}}

\newcommand{\Z}{\mathbb{Z}}
\newcommand{\C}{\mathbb{C}}
\newcommand{\Q}{\mathbb{Q}}

\newcommand{\beqnn}{\begin{eqnarray*}}
\newcommand{\eeqnn}{\end{eqnarray*}}
\newcommand{\beqn}{\begin{eqnarray}}
\newcommand{\eeqn}{\end{eqnarray}}
\newcommand{\beq}{\begin{equation}}
\newcommand{\eeq}{\end{equation}}

\theoremstyle{plain}
\newtheorem{thm}{Theorem}[section]
\newtheorem{prop}[thm]{Proposition}

\newtheorem{cor}[thm]{Corollary}
\newtheorem{rmk}[thm]{Remark}
\newtheorem{defi}{Definition}[section]

\begin{document}

\title[Masur's Divergence for Tori and Kummer Surfaces]{Masur's Divergence for Higher Dimensional Tori and Kummer Surfaces}

\author{Zhijing Wang}
\address{Yau Mathematical Sciences Centre\\ Tsinghua University\\ Beijing, China}
\email{zj-wang19@mails.tsinghua.edu.cn}

\begin{abstract}
Masur's divergence states that the horizontal foliation of translation surfaces is uniquely ergodic if the geodesic flow is recurrent on the moduli space. This established a relationship between geometrical properties of foliations and the dynamics on the moduli space. In this paper, we extend this theorem to complex torus and Kummer surfaces. We define and calculate horizontal foliations and the corresponding geodesic flow in the moduli space of K\"ahler metrics and prove that the horizontal foliation is uniquely ergodic if the geodesic flow is recurrent. We also find the a necessary algebraic condition on the geodesic flow for the horizontal foliation to be uniquely ergodic. \end{abstract}

\maketitle
\tableofcontents
\section{Introduction}
\subsection{Motivations}
Given a surface $S_g$ with genus $g> 1$, its Teichm\"uller space $\mathrm{Teich}(S_g)$ is defined as complex structures on $S_g$ up to isotopy, which is equivalent to the space of hyperbolic metrics on $S_g$ up to isotopy. The Teichm\"uller distance on the Teichm\"uller space is given by the logarithmic of the infimum dilatation among maps between the two complex structures: $d_{Teich}(X,Y)=\inf_{f:X\to Y}\log K_f$ where the dilatation $K_f$ of a map $f$ is given by $K_f=\sup_{p\in X}\frac{|f_z|+|f_{\bar{z}}|}{ |f_z|-|f_{\bar{z}}|}$ which at every point describes the ratio of the major and minor axes of the ellipse given by the image of infinitesimal circles under $f$. The Teichm\"uller distance roughly measures how much two conformal structures are different. 

Teichm\"uller showed that for every diffeomorphism on $S_g$, there is a unique map (differentiable outside finitely many points) in its isotopy class, called the Teichm\"uller map, minimizing the dilatation, thus achieving the Teichm\"uller distance. 

The Teichm\"uller map is described explicitly by holomorphic quadratic differentials. For a Teichm\"uller map $f: X\to Y$, there are two holomorphic quadratic differentials $q$ on $X$ and $p$ on $Y$, such that under the natural coordinates of $q$ and $p$ that writes $q=dz^2$ and $p=dw^2$, $f$ is of the form $\Re w=\sqrt{K_f}\Re z, \Im w=\frac{1}{\sqrt{K_f}}\Im z$. Given a quadratic differential $q$ on $X$, taking the horizontal foliations and vertical foliations to be parallel to the $x$ and $y$ axes in the natural coordinates of $q$ and $p$, we see that the Teichm\"uller map simply contracts the vertical foliations and expands the horizontal foliations. 

The Teichm\"uller flow is given as the geodesic flow on the Teichm\"uller space under the Teichm\"uller distance. We also call it the geodesic flow on Teichm\"uller space and denote it by $Ra_t(X,q)$ where the quadratic differential $q$ is identified with the cotangent space of $\mathrm{Teich}(S_g)$. The Teichm\"uller flow gives a set of complete geodesics on the Teichm\"uller space, whose end points can be identified with the horizontal and vertical foliations of the quadratic differentials.  

The moduli space of $S_g$ is defined as the complex structures on $S_g$ up to diffeomorphisms, so it is a quotient of the Teichm\"uller space by the mapping class group $\Gamma=\mathrm{Diff}(S_g)/\mathrm{Diff}_0(S_g)$ where $\mathrm{Diff}_0(S_g)$ is the group of diffeomorphisms on $S_g$ and $\mathrm{Diff}_0(S_g)$ is the identity component of the diffeomorphism group, or diffeomorhpisms of $S_g$ isotopic to identity. The mapping class group acts properly discontinuously on the Teichm\"uller space, so the geodesic flows descends to the moduli space.

Masur's divergence established a relationship between unique ergodicity of foliations and the dynamics of the geodesic flow on the moduli space.

\begin{thm}[\cite{Masdiv}] Let $X$ be a Riemann surface with quadratic differential $q$. Suppose the Teichm\"uller flow $Ra_t(X,q)$ is recurrent in the moduli space, then the horizontal foliation of $(X,q)$ is uniquely ergodic.
\end{thm}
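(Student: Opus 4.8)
The plan is to prove the contrapositive: assuming the horizontal foliation $\cF_h$ of $(X,q)$ is \emph{not} uniquely ergodic, I will exhibit a sequence of times $t_n\to+\infty$ at which $Ra_{t_n}(X,q)$ carries an arbitrarily short essential geodesic segment, so that the geodesic is divergent in the moduli space and in particular not recurrent. This rests on two standard ingredients. The first is a compactness criterion of Mahler--Mumford type: a family of unit-area flat surfaces is relatively compact in the moduli space precisely when the length of the shortest essential segment stays bounded below. The second is the period computation: the $q$-holonomy of such a segment $\delta$ is a pair $(x_\delta,y_\delta)=\bigl(\int_\delta\Re\sqrt q,\ \int_\delta\Im\sqrt q\bigr)$ which $Ra_t$ sends to $(e^{t}x_\delta,e^{-t}y_\delta)$, so the length of $Ra_t\delta$ is $\sqrt{e^{2t}x_\delta^2+e^{-2t}y_\delta^2}$, minimized over $t$ at the balancing time $t_\delta=\tfrac{1}{2}\log|y_\delta/x_\delta|$ with value $\sqrt{2|x_\delta y_\delta|}$. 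Hence $Ra_t(X,q)$ eventually leaves a given compact set as soon as there is a segment $\delta$ with $|x_\delta y_\delta|$ small and $t_\delta$ large, i.e.\ with $|x_\delta|\to0$ even faster than $|y_\delta|$.

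The whole proof therefore reduces to: non-unique ergodicity of $\cF_h$ produces essential segments $\delta_n$ with $x_{\delta_n}\to0$, $x_{\delta_n}y_{\delta_n}\to0$, and balancing times $t_{\delta_n}\to+\infty$. To obtain these, I would first restrict to a minimal component of $\cF_h$ supporting two non-proportional ergodic transverse invariant probability measures $\nu_1,\nu_2$, one normalized to the flat measure $|\Im\sqrt q|$. Integrating a transverse measure over relative arcs is a linear functional, so we get classes $c_1\ne c_2$ in the relative cohomology $H^1(X,\Sigma;\R)$ (with $\Sigma$ the set of cone points) that are linearly independent. The geometric meaning of $c_1\ne c_2$ is that along suitable transverse arcs $\nu_1$ and $\nu_2$ assign asymptotically different masses; using minimality to approximate these arcs by essential segments and comparing $\int_{\delta_n}\nu_1$ with $\int_{\delta_n}\nu_2$ forces $y_{\delta_n}=\int_{\delta_n}|\Im\sqrt q|\to0$ while the horizontal periods $x_{\delta_n}$ stay bounded; a finer analysis of the first-return combinatorics then upgrades this to $x_{\delta_n}\to0$ with $t_{\delta_n}\to+\infty$, which combined with the first paragraph gives the desired contradiction with recurrence.

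I expect the main obstacle to be exactly this extraction step --- converting a purely measure-theoretic defect (two ergodic transverse measures) into a divergent sequence of \emph{geometric} objects with quantitatively controlled periods; in Masur's original treatment this is the delicate interval-exchange / zippered-rectangle estimate. In the present paper the structure of the examples should streamline it considerably. For a complex torus $X=\C^n/\Lambda$ the horizontal foliation is linear, its transverse invariant measures are controlled by a finite system of $\Q$-linear relations among the periods, and ``essential segments'' are just lattice vectors; non-unique ergodicity of the associated linear flow is then equivalent to the existence of a nontrivial integral relation, and such a relation is precisely a covector in $\Lambda^\vee$ lying in the contracting subspace of $Ra_t$, whose image tends to $0$ and thereby makes the lattice $Ra_t\Lambda^\vee$ (hence $Ra_t(X,q)$) degenerate --- the short-vector mechanism directly, with no interval exchanges. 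For a Kummer surface $Y$ I would lift $\cF_h$ and each transverse measure through the degree-two quotient from the associated torus, noting that the resolution of the sixteen nodes alters $Y$ only over a compact piece and so affects neither recurrence nor the relevant length spectrum, apply the torus case on the cover, and descend, using that $\pm1$-invariant foliations and transverse measures correspond bijectively upstairs and downstairs.
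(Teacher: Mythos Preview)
The paper does not prove the stated theorem at all: it is quoted from Masur as motivation, and the paper's own contribution is the torus analogue (Theorem~\ref{masurtorus}) and its Kummer corollary. So the relevant comparison is between your sketch and the paper's proof of Theorem~\ref{masurtorus}, and these are genuinely different strategies. The paper argues \emph{directly}: assuming recurrence, it takes two ergodic transverse measures $\mu,\nu$ with generic points $x,y$, uses the recurrence to pull back a fixed cube from the limit surface $X_\infty$ to a long thin box in $X$ joining $x$ to $y$, and then applies Birkhoff's theorem along the horizontal faces to force $\mu(Q)=\nu(Q)$ for every flow box $Q$. No short vectors, no Mahler criterion, no saddle-connection estimates. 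Your approach is the contrapositive, \`a la Masur's original paper: extract short essential segments from the failure of unique ergodicity and use Mahler--Mumford to conclude divergence. That route is legitimate in principle but considerably heavier, which is why the paper avoids it.

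There is, however, a genuine gap in your outline that you should be aware of. Producing segments $\delta_n$ that are short at their balancing times $t_{\delta_n}\to\infty$ only shows that $Ra_t(X,q)$ leaves every compact set along the subsequence $t_{\delta_n}$; it does \emph{not} rule out recurrence, since the trajectory could return to a compact set between those times. To get non-recurrence you must show the systole tends to zero for \emph{all} large $t$, i.e.\ that the ``short windows'' of your $\delta_n$ eventually cover $[T,\infty)$. This is exactly the delicate part of Masur's argument and cannot be finessed. Your torus paragraph inherits the same problem, and has an additional orientation issue: non-unique ergodicity of the horizontal foliation gives a dual lattice vector in the \emph{vertical} subspace, but under $Ra_t$ (with the conventions here) the dual lattice is acted on by $a_{-t}$, which \emph{expands} the vertical direction---so that covector does not tend to $0$ as you claim, and a single short vector is not what forces divergence in this setting. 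The paper's direct Birkhoff argument sidesteps all of this, which is precisely its advantage.
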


Recently, there have been many interesting results making analogies between Riemann surfaces and higher dimensional objects. For example, Cantat \cite{Canaut} established an analogy of stable and unstable foliations for pseudo-Anosov mappings, proving that that there exists stable and unstable currents contracted and expanded by the hyperbolic automorphisms of K3 surface. Filip \cite{filcount} made an analogy between periodic billiard trajectories on translation surfaces and special Lagrangian tori on K3 surfaces and counted the growth of the number of special Lagrangian tori in a twistor family.

\subsection{Main results}
In this article, we are interested in extending the definitions and properties of foliations and the Teichm\"uller flow to higher dimensional subjects. We start with the case of complex tori. 

Unlike the case of Riemann surfaces, for higher dimensional objects, the space of complex structures up to isotopy and the space of flat (hyperbolic) metrics up to isotopy are different objects. For the complex torus, and hyperk\"ahler manifolds (including K3 surfaces), the moduli space of complex structures is even highly non-Hausdorff. In fact, almost all complex structures are ergodic in the moduli space \cite{Vererg}. Thus to study dynamics on the moduli space and to achieve geometric results related to the dynamics, the non-Hausdorff complex moduli space may not be the appropriate analogy.

In this article, we consider instead the moduli space of K\"ahler metrics on a differentiable manifold $M$ which is the set of equivalent K\"ahler structures of $M$ up to isotopy. We define this as $$\mathrm{Kah}(M) = \{(X,I,\omega) \} / \sim$$ where $(X,I,\omega)$ are taken over K\"ahler manifolds $X$ equipped with complex structure $I$ and K\"ahler form $ \omega$ such that $X$ is diffeomorphic to $M$; the equivalence relation is defined as $(X,I,\omega)\sim (Y,J,\nu)$ if there exists a homeomorphism $f:X\to Y$ such that $f^*J=I$ and $[f^*\nu]=[\omega]\in H^2(X,\R)$.

For a complex torus, up to an isometry of K\"ahler structures, we may view the complex torus as $\C^n/\Lambda$ for a lattice $\Lambda$ of $\C^n$ with the standard K\"ahler metric $\omega_0=\sum_{i=1}^n dx_i\wedge dy_i$ on $\C^n$. The moduli space of K\"ahler metrics of a $2n$-torus $\mathbb{T}^{2n}$ is $\mathrm{Kah}(\mathbb{T}^{2n})=\mathrm{SL}(2n,\Z)\backslash \mathrm{SL}(2n,\R)/\mathrm{U}(n)$.

The group $\mathrm{SL}(2n,\R)$ acts on $\mathrm{SL}(2n,\Z)\backslash \mathrm{SL}(2n,\R)$ by right multiplication. This descends to the moduli space of K\"ahler structures of complex torus. More concretely, for a torus  $X=\C^n/\Lambda(g)$, we identify the lattice $\Lambda(g)$ with a matrix  $g\in \mathrm{SL}(2n,\R)$ by taking a basis of $\Lambda(g)$ to be the row vectors of $g$ and identifying $\C^n$ with $\R^{2n}$ by $(z_1=x_1+iy_2,...,z_n=x_n+iy_n)\mapsto(x_1,y_1,...,x_n,y_n)$.

We define the geodesic flow $Ra_t$ on the moduli space to be image of the action of $a_t=\mathrm{diag}(e^{-t}, e^{t},...,e^{-t}, e^{t})$ on $\mathrm{SL}(2n,\Z)\backslash \mathrm{SL}(2n,\R)$ in the moduli space $\mathrm{Kah}(\mathbb{T}^{2n})$, given as $Ra_t(\C^n/\Lambda(g))=\C^n/\Lambda(ga_t)$. The geodesic flow is said to be \emph{recurrent} if it visits a compact subset of the moduli space infinitely many times.

We define the \emph{horizontal} and \emph{vertical foliations} on $\C^n/\Lambda$ to be $n$-dimensional foliations on the torus whose leaves are the subspaces in $\C^n$ parallel to the $x$-axis and $y$-axis respectively. A foliation is said to be \emph{uniquely ergodic} if it only has one transverse measure up to a non-zero scalar multiplication. This is roughly saying that the foliation is equally distributed on the manifold.
In this way, the geodesic flow acts on the torus $\C^n/\Lambda$ by contracting the horizontal foliations and expanding the vertical foliations. 

Then we have the following analogy for Masur's criterion.

\begin{thm}\label{masurtorus}Let $X=\C^n/\Lambda(g)$ be a $2n$-torus for $g\in GL(2,\R)$. Assume that the geodesic flow $\{Ra_t X: t\le 0\}$ is recurrent in the moduli space of K\"ahler metrics $\mathrm{Kah}(\mathbb{T}^{2n})$. Then the horizontal foliation on $X$ must be uniquely ergodic.
\end{thm}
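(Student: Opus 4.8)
The plan is to convert unique ergodicity of the horizontal foliation into an explicit condition on the lattice $\Lambda(g)$, and then to show that failure of that condition forces the orbit $\{Ra_t X\}$ to leave every compact subset of $\mathrm{Kah}(\mathbb{T}^{2n})$, contradicting recurrence. Write $V_x=\mathrm{span}(e_1,e_3,\dots,e_{2n-1})$ and $V_y=\mathrm{span}(e_2,e_4,\dots,e_{2n})$, so that the leaves of the horizontal foliation $\mathcal{H}$ are the translates of $V_x$ in $\R^{2n}/\Lambda(g)$. First I would identify the transverse invariant measures of $\mathcal{H}$: taking transversals to be open pieces of $V_y$-cosets and identifying each with an open subset of $V_y$, the holonomy pseudogroup is generated by the translations $u\mapsto u+P_{V_y}(\lambda)$ for $\lambda\in\Lambda(g)$, where $P_{V_y}$ is the orthogonal projection onto $V_y$. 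Hence transverse invariant measures are in bijection with $P_{V_y}(\Lambda(g))$-invariant Radon measures on $V_y\cong\R^n$. Since $\Lambda(g)$ spans $\R^{2n}$, a short argument (average a compactly supported test function against the group and use density) shows that when $P_{V_y}(\Lambda(g))$ is dense in $V_y$ the only such measure up to scaling is Lebesgue measure, whereas when its closure is a proper subgroup $U\oplus L$ with $L$ a nontrivial lattice the Haar measures on the cosets of $U\oplus L$ give a positive-dimensional family of mutually singular invariant measures. Therefore $\mathcal{H}$ is uniquely ergodic if and only if $P_{V_y}(\Lambda(g))$ is dense in $V_y$.

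Next I would dualize this condition. The projection $P_{V_y}(\Lambda(g))$ fails to be dense precisely when some nonzero linear functional on $V_y$ takes only integer values on it; extending such a functional by zero across $V_x$ produces a nonzero vector $w$ of the dual lattice $\Lambda(g)^{*}=\mathbb{Z}^{2n}(g^{-1})^{T}$ lying in $V_y$, and conversely any $0\neq w\in\Lambda(g)^{*}\cap V_y$ obstructs density. So suppose, aiming at a contradiction, that $\mathcal{H}$ is not uniquely ergodic and fix a primitive $w\in\Lambda(g)^{*}\cap V_y$, $w\neq0$. Then $w^{\perp}$ is a hyperplane containing $V_x$, and $\Lambda_0:=\Lambda(g)\cap w^{\perp}$ is a rank-$(2n-1)$ sublattice with covolume $\mathrm{covol}(\Lambda(g))\,\|w\|$ inside $w^{\perp}$. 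Since $w$ is an eigenvector of every $a_t$, the hyperplane $w^{\perp}$ is $a_t$-invariant, and $\det(a_t|_{w^{\perp}})$ equals the reciprocal of the $a_t$-eigenvalue along $V_y$; along the half-orbit $\{Ra_t X: t\le 0\}$ this tends to $0$, so $\mathrm{covol}_{w^{\perp}}(\Lambda_0 a_t)\to 0$. Minkowski's convex body theorem, applied inside the $(2n-1)$-dimensional space $w^{\perp}$, then yields nonzero vectors of $\Lambda_0 a_t\subset\Lambda(ga_t)$ of norm $O\!\big(\mathrm{covol}_{w^{\perp}}(\Lambda_0 a_t)^{1/(2n-1)}\big)$, so the first minimum satisfies $\lambda_1(\Lambda(ga_t))\to 0$. (Equivalently one can argue dually: $w$ itself gives a vector of $\Lambda(ga_t)^{*}=\Lambda(g)^{*}a_{-t}$ that scales along $V_y$ and becomes arbitrarily short along this half-orbit, and then a transference inequality together with Minkowski's second theorem forces $\lambda_1(\Lambda(ga_t))\to 0$.)

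To finish, recall that $\mathrm{U}(n)\subset\mathrm{SO}(2n)$ acts on lattices by Euclidean isometries, so a subset of $\mathrm{Kah}(\mathbb{T}^{2n})=\mathrm{SL}(2n,\Z)\backslash\mathrm{SL}(2n,\R)/\mathrm{U}(n)$ is relatively compact if and only if its preimage in $\mathrm{SL}(2n,\Z)\backslash\mathrm{SL}(2n,\R)$ is, which by Mahler's compactness criterion holds if and only if the first minima of the corresponding unimodular lattices stay bounded away from $0$. The estimate $\lambda_1(\Lambda(ga_t))\to 0$ then says exactly that $Ra_t X$ eventually leaves every compact subset of the moduli space, contradicting recurrence; hence $\mathcal{H}$ must be uniquely ergodic. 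I expect the main difficulty to be the first step—pinning down the transverse invariant measures of a possibly irrational $n$-dimensional linear foliation of $\mathbb{T}^{2n}$ and classifying the invariant Radon measures of a finitely generated (possibly dense) subgroup of $\R^{n}$; once that dictionary is in place, the remaining steps are routine geometry of numbers.
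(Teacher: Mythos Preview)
Your route is genuinely different from the paper's. The paper never passes through an algebraic characterization of unique ergodicity; it fixes two hypothetical ergodic transverse measures $\mu,\nu$, picks Birkhoff-generic points for each, and uses recurrence along $t_m\to-\infty$ to build, back in $X$, long thin boxes whose two horizontal $n$-faces are long orbit pieces generic for $\mu$ and for $\nu$ respectively while the vertical edges shrink to $0$, forcing the two time-averages to coincide on every flow box. Your first two steps---identifying transverse measures with $P_{V_y}(\Lambda(g))$-invariant Radon measures on $V_y$, and dualizing non-density of $P_{V_y}(\Lambda(g))$ to the existence of a nonzero $w\in\Lambda(g)^{*}\cap V_y$---are correct and essentially recover the paper's dichotomy for linear foliations.

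It is precisely the step you called ``routine geometry of numbers'' that breaks. With $a_t=\mathrm{diag}(e^{-t},e^{t},\dots,e^{-t},e^{t})$ the eigenvalue of $a_t$ on $V_y$ is $e^{t}$, so $\det(a_t|_{w^{\perp}})=e^{-t}$; along $t\le0$ this tends to $+\infty$, not $0$, and Minkowski in $w^{\perp}$ yields nothing. The dual variant fails for the same reason: the image of $w$ in $\Lambda(ga_t)^{*}$ is $wa_{-t}$, of length $e^{-t}\|w\|\to\infty$. In fact the implication you want is false in this direction. For $n=1$ take $g=\begin{pmatrix}1&0\\ \sqrt2&1\end{pmatrix}$: then $(0,1)\in\Lambda(g)^{*}\cap V_y$ and every horizontal leaf is a closed circle, so $\mathcal H$ is not uniquely ergodic; yet for nonzero $v=(m+n\sqrt2,\,n)\in\Lambda(g)$ one has $\|va_t\|^{2}=(m+n\sqrt2)^{2}e^{-2t}+n^{2}e^{2t}\ge 2|n|\,|m+n\sqrt2|\ge 2c>0$ when $n\ne0$ (since $\sqrt2$ is badly approximable) and $\|va_t\|=|m|e^{-t}\ge1$ when $n=0$, so by Mahler the half-orbit $\{Ra_tX:t\le0\}$ is bounded, hence recurrent. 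Your Minkowski mechanism does force divergence, but only as $t\to+\infty$; before resubmitting you should reconcile carefully which half-orbit and which foliation are meant, because the lattice argument you wrote runs in the opposite direction from the one stated.
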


This illustrates an interesting relationship between dynamical property of the geodesic flow on the geometric property of foliations on $X$.

We remark that there are, of course, many ways to define geodesic flows on $\mathrm{SL}(2n,\R)$, and we choose $a_t=\mathrm{diag}(e^{-t}, e^{t},...,e^{-t}, e^{t})$ so that it acts on foliations similar to the way Teichm\"uller flow does for complex dimension 1. In fact, any action with exactly $n$ eigenvalues larger than 1 and $n$ eigenvalues smaller than 1 would have similar properties, one simply needs to define the horizontal and vertical foliations to be the respective eigen-spaces.

For real dimension $2$ where $X=\mathbb{T}^2$, Masur's divergence is simply given by the fact that non-rational foliations are uniquely ergodic. (If a foliation is not uniquely ergodic, then a geodesic curve of $X$ must be a leaf of the foliation, so the length of the geodesic tends to zero along the geodesic flow $Ra_t X$ which implies that $Ra_t X$ must be divergent. ) For higher dimensions, the failure of unique ergodicity of the horizontal foliation does not imply the existence of a closed leaf of the foliation, so Theorem \ref{masurtorus} cannot be simply deduced by the fact (See Theorem \ref{slftorus}) that non-rational straight line flows on torus are uniquely ergodic. 

Furthermore, the converse of Theorem \ref{masurtorus} is not true for higher dimensional complex tori, as is the case for Riemann manifolds. For example, on the torus $\C^2/\Lambda(g)$ where $\Lambda(g)$ is a lattice in $\C^2$ spanned by the row vectors of 
$g=\begin{pmatrix} 1 &0\\ i&0\\ \sqrt{3} +\sqrt{5}i & 1+\sqrt{2}i  \\ 0&i\end{pmatrix}$, where $i=\sqrt{-1}$, the horizontal foliation is uniquely ergodic but the geodesic flow starting at $g$ is divergent. For proof see Proposition \ref{exp}.

We next extend the result to other complex manifolds. The simplest extension is to Kummer surfaces. A Kummer surface is defined as the minimal resolution of a complex torus $X$ mod involution. More explicitly, taking $\tau$ to be the involution on $\C^2/\Lambda(g)$ descending from the inversion map $(z_1,z_2)\to (-z_1,-z_2)$ on $\C^2$, the Kummer surface $\mathrm{Kum}(g)$ is given by the blow-up of $(\C^2/\Lambda(g))/\tau$ at the 16 fixed points of $\tau$. We denote by $C_j, j=1,...,16$ the $16$ exceptional curves given by the blow-ups of the fixed points. The orbifold $(\C^2/\Lambda(g))/\tau$ before blow-up is called the exceptional Kummer surface. Kummer surfaces are examples of K3 surfaces. 

For Kummer surfaces, we consider the moduli space of degenerate K\"ahler forms, i.e. K\"ahler forms defined outside some exceptional curves, denoted by  $\overline{\mathrm{Kah}}(M_0)$ (see Section \ref{kum} for more details on this) where $M_0$ is the differentiable manifold underlying Kummer surfaces. Moreover, we define the geodesic flow in the moduli space of Kummer surfaces to be given by that of the torus, i.e. $Ra_t \mathrm{Kum}(g)=\mathrm{Kum}(ga_t)$. We define the horizontal foliation and vertical foliation of a Kummer surface to be the image of the horizontal and vertical foliations of the torus. 

Thus using our result for torus, we also establish the analogy of Masur's divergence to Kummer surfaces.

\begin{thm}\label{masurkum}Let $X$ be a Kummer surface. Assume that the geodesic flow $\{a_t X: t\le 0\}$ is recurrent in the moduli space of degenerate K\"ahler metrics $\overline{\mathrm{Kah}}(X)$. Then the horizontal foliation on $X$ must be uniquely ergodic.
\end{thm}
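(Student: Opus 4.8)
The plan is to reduce Theorem \ref{masurkum} to Theorem \ref{masurtorus} by carefully tracking how unique ergodicity transfers between the torus $X_0 = \C^2/\Lambda(g)$ and its Kummer surface $\mathrm{Kum}(g)$ through the intermediate quotient orbifold $(\C^2/\Lambda(g))/\tau$. First I would set up the precise correspondence between the moduli space data: I need to check that recurrence of $\{Ra_t\,\mathrm{Kum}(g): t\le 0\}$ in $\overline{\mathrm{Kah}}(M_0)$ is equivalent to (or at least implies) recurrence of $\{Ra_t\, X_0: t\le 0\}$ in $\mathrm{Kah}(\mathbb{T}^4)$. Since the geodesic flow on Kummer surfaces is \emph{defined} to be that of the torus, and since the exceptional curves $C_j$ have $a_t$-invariant homology classes, a compact set in $\overline{\mathrm{Kah}}(M_0)$ should pull back under the blow-down/quotient construction to a compact set in $\mathrm{Kah}(\mathbb{T}^4)$ — the point being that a Kummer surface degenerating in its moduli space forces the underlying torus to degenerate, because the volume and systole of the torus part are controlled by the K\"ahler class away from the $C_j$. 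I would make this quantitative: the covolume of $\Lambda(ga_t)$ and the length of its shortest closed horizontal/vertical leaf are read off from the metric on $(\C^2/\Lambda(ga_t))/\tau$, which is part of the degenerate K\"ahler data on $\mathrm{Kum}(ga_t)$.

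With the recurrence transferred, Theorem \ref{masurtorus} gives that the horizontal foliation $\cF_h$ on $X_0$ is uniquely ergodic. The next step is to push this forward. The involution $\tau$ maps the horizontal foliation of $X_0$ to itself (since $(z_1,z_2)\mapsto(-z_1,-z_2)$ preserves the $x$-axis direction), so $\cF_h$ descends to a singular foliation $\bar\cF_h$ on the orbifold $(\C^2/\Lambda(g))/\tau$ with singularities at the $16$ fixed points, and then lifts to a foliation on $\mathrm{Kum}(g)$ whose leaves are the proper transforms, meeting each exceptional curve $C_j$ in finitely many points. I would argue that a transverse measure for the horizontal foliation on $\mathrm{Kum}(g)$ restricts, away from the $C_j$, to a $\tau$-invariant transverse measure on $X_0$ minus the fixed points; since the fixed-point set and the $C_j$ have measure zero for any transverse measure (they are lower-dimensional), the measure extends to a transverse measure on all of $X_0$. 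By unique ergodicity on the torus this measure is a scalar multiple of the standard one, and conversely the standard transverse measure on $X_0$ is $\tau$-invariant and descends, so it is the unique one on $\mathrm{Kum}(g)$ up to scaling. This establishes unique ergodicity of $\bar\cF_h$ on the Kummer surface.

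The main obstacle I anticipate is the analysis at the exceptional curves: one must rule out the existence of an exotic transverse measure supported near or along the $C_j$, and more subtly, one must verify that the notion of ``transverse measure'' for a singular foliation on a K3 surface with leaves hitting rational curves is well-behaved enough that the measure-zero argument for $\bigcup_j C_j$ actually applies. This requires being careful about what happens to leaves as they approach a $C_j$: a single horizontal leaf in $X_0$ passing through (a neighborhood of) a fixed point of $\tau$ becomes, after quotient and blow-up, a leaf that either terminates on $C_j$ or passes through it transversally, and I need the transverse measure to assign no atom to such loci. I expect this to follow from the local model of the blow-up — near $C_j$ the foliation is a pencil of lines through a point, pulled back — together with the fact that the exceptional curves are $a_t$-invariant so they cannot ``absorb'' transverse mass under the flow without contradicting recurrence. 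A secondary technical point is making sure the degenerate K\"ahler moduli space $\overline{\mathrm{Kah}}(M_0)$ is set up so that ``recurrent'' there genuinely controls the torus; I would lean on the explicit description in Section \ref{kum} and the fact that the map $g\mapsto\mathrm{Kum}(g)$ is equivariant for the $\mathrm{SL}(4,\R)$-action, so compactness downstairs is detected by compactness in $\mathrm{SL}(4,\Z)\backslash\mathrm{SL}(4,\R)/\mathrm{U}(2)$ upstairs.
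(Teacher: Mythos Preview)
Your proposal is correct and follows essentially the same approach as the paper: transfer recurrence from the Kummer moduli space up to the torus moduli space, apply Theorem~\ref{masurtorus}, and then push unique ergodicity of the horizontal foliation back down through the quotient and blow-up. The paper's own argument is in fact considerably terser than yours---it simply asserts both transfer steps without the careful analysis near the exceptional curves or the explicit compactness discussion you outline---so your proposal, if anything, fills in details the paper leaves implicit.
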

%landmark why significant define C_j

The moduli space of degenerate K\"ahler metrics of a Kummer surface, which is the same as the moduli space of degenerate K\"ahler metrics of K3 surfaces, can be explicitly calculated via the period map, using the Torelli theorem.

%For example, consider the special case of hyperbolic Kummer automorphisms, i.e. an automorphism $f:X\to X$ on a Kummer surface $X$ given by a hyperbolic automorphisms of the corresponding complex torus. Then the automorphism induces an automorphism $f^*$ of the cohomology class $H^2(X,\C)$ that preserves the Hodge structure and acts hyperbolically on $H^{1,1}(X,\R)\simeq \R^{1,19}$. Then there exists $\eta_1,\eta_2\in$

We also have the following necessary condition for a horizontal foliation on the Kummer surface to be uniquely ergodic. 

\begin{thm}\label{rank} For a Kummer surface $X$ with flat K\"ahler form $\omega$, let $\mathcal{F}^h$ be its horizontal foliation. Then there exists cohomology classes $\eta_1,\eta_2,\eta_3\in H^2(X,\R)$ such that the geodesic flow $Ra_t(X)$ is of the form $[e^{-2t}\eta_1+e^{2t}\eta_2+\sqrt{-1}\eta_3,\omega]\in  \mathrm{\overline{Kah}}(M_0)$. Moreover, $\mathcal{F}^h$ is uniquely ergodic only if $rank(\eta_2^\perp \cap H^2(X,\Z))< 19$. 
\end{thm}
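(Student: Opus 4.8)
The plan is to combine the explicit description of the period map for K3 (Kummer) surfaces with a criterion for unique ergodicity of the horizontal foliation coming from the structure of the torus cover. First I would produce the cohomology classes $\eta_1,\eta_2,\eta_3$: write the flat Kähler form $\omega$ on the torus $\C^2/\Lambda(g)$ in the natural real coordinates, decompose $\omega a_t$ (equivalently, the pullback of the flat form under the scaling $a_t$) into its $e^{-2t}$, $e^{2t}$ and fixed parts — this is just the computation already invoked to define $Ra_t$, pushed through the quotient by $\tau$ and the minimal resolution, so that $Ra_t(X) = [e^{-2t}\eta_1 + e^{2t}\eta_2 + \sqrt{-1}\,\eta_3,\omega] \in \overline{\mathrm{Kah}}(M_0)$. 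The class $\eta_2$ is, up to scale, the cohomology class Poincaré-dual to the horizontal foliation $\cF^h$ (the leaves expanded by the inverse flow), transported from $H^2(\mathbb{T}^4,\R)$ to $H^2(X,\R)$ via the Kummer construction; I would record its relation to the plane spanned by the ``$x$-directions'' in $\C^2$.

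Next I would set up the contrapositive: suppose $rank(\eta_2^\perp \cap H^2(X,\Z)) \ge 19$ and derive that $\cF^h$ is not uniquely ergodic. Using $b_2(X) = 22$ and the signature $(3,19)$ of the intersection form on a K3 surface, $\eta_2^\perp$ (the orthogonal complement inside $H^2(X,\R)$) has dimension $21$ (since $\eta_2^2 = 0$, as $\eta_2$ is a class of a foliation/nef boundary class — I would verify $\eta_2$ is isotropic), so $\eta_2^\perp \cap H^2(X,\Z)$ having rank $\ge 19$ forces the orthogonal complement to be defined over $\Q$ on a codimension-$\le 2$ subspace; equivalently $\eta_2$ itself, or a small-codimension subspace containing it, is ``rational.'' I would then pull this rationality statement back to the torus: the sixteen exceptional curves $C_j$ are rational classes, so a rationality constraint on $\eta_2^\perp$ in $H^2(X,\Z)$ translates, via the isometry $H^2(X,\Z) \supset$ (torus part) $\oplus \bigoplus \Z[C_j]$, into a rationality constraint on the corresponding class in $H^2(\mathbb{T}^4,\Z) = \bigwedge^2 \Z^4$. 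The key geometric input is the torus analogue (Theorem \ref{slftorus}/\ref{masurtorus} circle of ideas): if the plane defining the horizontal foliation is sufficiently rational — precisely, if it is not totally irrational in the sense that the $\Z$-span of integral classes annihilating it is too large — then the foliation on the torus carries more than one transverse measure (e.g. it contains a proper rational subfoliation, or splits off a closed sub-leaf class), hence is not uniquely ergodic; and unique ergodicity descends/ascends between the torus and the Kummer surface since $\cF^h$ on $X$ is the image of $\cF^h$ on the torus and the branch locus is a measure-zero finite set of points blown up to curves transverse to the generic leaf.

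The main obstacle I expect is making the ``rationality of $\eta_2^\perp\cap H^2(X,\Z)$ has rank $\ge 19$'' statement interact cleanly with the transverse measures: the rank bound is a statement about integral cohomology, while unique ergodicity is about transverse measures, and bridging them requires showing that an extra rational class orthogonal to $\eta_2$ produces a genuinely distinct transverse measure rather than a cohomologous one. I would handle this by identifying transverse measures of $\cF^h$ with elements of a cone in $H^2(X,\R)$ (or $H_2(X,\R)$) determined by positivity against the foliation, paralleling the identification of transverse measures with currents in the surface case, and then showing that the dimension of the linear span of this cone is controlled by $22 - rank(\eta_2^\perp\cap H^2(X,\Z))$; when that rank is $\ge 19$ the span has dimension $\le 3$ but strictly more than $1$ after removing the trivial direction, giving at least two independent transverse measures. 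The positivity/closedness of the relevant cone and the precise bookkeeping with the $C_j$ are where the real work lies; everything upstream is linear algebra on the K3 lattice plus the already-established torus case.
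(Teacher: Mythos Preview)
Your proposal and the paper's proof run in opposite directions. You (correctly reading the ``only if'') set up the contrapositive ``$\mathrm{rank}(\eta_2^\perp\cap H^2(X,\Z))\ge 19 \Rightarrow \cF^h$ not uniquely ergodic'' and try to manufacture a second transverse measure from the extra integral classes orthogonal to $\eta_2$. The paper, however, actually proves the reverse implication ``not uniquely ergodic $\Rightarrow \mathrm{rank}\ge 19$'': by Corollary~\ref{unierg}, failure of unique ergodicity on the torus means the horizontal $2$-plane lies in the real span of a rank-$3$ sublattice of $\Lambda(g)$; after an $\mathrm{SL}(4,\Z)$ basis change this is equivalent to $\Im\lambda^1,\Im\lambda^2,\Im\lambda^3$ being collinear in $\R^2$, which forces $\eta_2^{12}=\eta_2^{13}=\eta_2^{23}=0$ and, together with the sixteen exceptional classes $C_j$, yields $19$ independent integral classes orthogonal to $\eta_2$. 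The whole argument is elementary linear algebra on the torus lattice --- there is no identification of transverse measures with a cone in $H^2$. (So either the ``only if'' in the statement or the direction of the proof has slipped; what is actually established is that $\mathrm{rank}<19$ is \emph{sufficient} for unique ergodicity.)

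Independently of the direction issue, your mechanism has a genuine gap at its last step. From $\mathrm{rank}(\eta_2^\perp\cap H^2(X,\Z))\ge 19$ you infer that the linear span of ``transverse-measure classes'' has dimension $>1$, but a span of dimension $>1$ does not force the \emph{cone} of actual transverse measures to contain more than one ray: you would need to exhibit a class that genuinely arises from a transverse measure, and nothing in the rank hypothesis supplies the required positivity. The paper's closing remark is precisely about this: it gives matrices $g,h$ with $\eta_2(g)$ and $\eta_2(h)$ equal up to $O(H^2(M_0,\Z))$, yet one horizontal foliation is uniquely ergodic and the other is not. Any argument that reads only the class $\eta_2$ --- as yours does --- cannot distinguish these two cases, so the implication you are aiming for cannot be obtained along this route.
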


We remark that the cohomology classes $\eta_1$, $\eta_2$ are on the boundary of the period domain and our result suggests that they can be thought of as representing the vertical and horizontal foliations. This gives an analogy of the identification of the boundary of Teichm\"uller space with horizontal and vertical foliations.

The form of the geodesic flow seems to suggest a generalization to general K3 surfaces. However, in \cite{McmSie} McMullen constructed examples of hyperbolic automorphisms of K3 surfaces that admits Siegel disks. The work of Cantat and Dupont \cite{CanKum} (for the projective case) and Filip and Tosatti \cite{Filkum} (for the general case)  on Kummer rigidity also suggests that foliation preserved by automorphisms of K3 surfaces cannot have full Lebesgue measure except for the case of Kummer surfaces we discussed in this article. Thus, it may be too optimistic to hope for horizontal and vertical foliations on general K3, even outside divisors. Nevertheless, these results do not rule out the possibility of geodesic flows not containing automorphisms to have full measure horizontal and vertical foliations. Moreover, if we hope the geodesic flows to contain automorphism as in translation surfaces, there is still hope of developing the theory for laminations instead of foliations to be preserved by ``geodesic flows''. 

\subsection{Organization}
In Section \ref{moduli} we recall the preliminaries on moduli spaces. In Section \ref{geodesic} we define the geodesic flow and foliations for complex torus. In Section \ref{versus} we prove Theorem \ref{masurtorus} for torus and discuss the converse. In Section \ref{kum}, we briefly recall the definitions and properties of K3 surfaces and Kummer surfaces, especially the period map and the generalized moduli spaces. In Section \ref{slag}, we apply the results on torus to Kummer surfaces and calculate explicit form and properties of the geodesic flow.

\subsubsection*{Acknowledgements}
I would like to thank my instructor Jinxin Xue and Yitwah Cheung for introducing me to the topic, for useful discussions and for comments on this article. 
\section{Moduli Spaces}\label{moduli}
First we give the definitions for the moduli spaces we shall be interested in, namely, the moduli space of complex structures and the moduli space of K\"ahler structures. 
\begin{defi}
For a differentiable manifold $M$, we denote by $\mathrm{Comp}(M)$ the \emph{space of complex structures} on M, i.e. $$\mathrm{Comp}(M)=\{J\in \mathrm{End}(TM), J^2=-1, [T^{1,0}_JM,T^{1,0}_JM]\subset T^{1,0}_JM\}.$$

We define its \emph{Teichm\"uller space} to be the space of complex structures up to isotopy: $$\mathrm{Teich}(M)=\mathrm{Comp}(M)/\mathrm{Diff}_0(M).$$

The \emph{mapping class group} of $M$ is defined as the diffeomorphisms of $M$ up to isotopy: $$\Gamma(M)=\mathrm{Diff}(M)/\mathrm{Diff}_0(M).$$

The \emph{moduli space of complex structures} of $M$ is the space of complex structures up to diffeomorphisms $$\mathrm{Mod}(M)=\mathrm{Comp}(M)/\mathrm{Diff}(M)=\mathrm{Teich}(M)/\Gamma(M).$$

The \emph{moduli space of K\"ahler structures} of $M$ is defined as $$\mathrm{Kah}(M) = \{(X,I,\omega) \} / \sim$$ where $(X,I,\omega) $ are taken over a K\"ahler manifold $X$ equipped with complex structure $I$ and K\"ahler form $ \omega$ which is diffeomorphic to $M$ and the equivalence is defined as $(X,I,\omega)\sim (Y,J,\nu)$ if there exists a homeomorphism $f:X\to Y$ such that $f^*J=I$ and $[f^*\nu]=[\omega]\in \mathrm{P}H^2(X,\R)$.
\end{defi}

\begin{rmk} For complex tori, the moduli space of K\"ahler structures is exactly the set of flat K\"ahler structures of volume 1 on $M$. For Calabi-Yau manifolds, this is the set of Ricci-flat K\"ahler structures of volume 1 on $M$ as proved by Yau's celebrated theorem that every K\"ahler class of a Calabi-Yau manifold admits a unique Ricci-flat K\"ahler metric \cite{Yau}.
\end{rmk}

For the complex tori, the calculation of Teichm\"uller space, the mapping class group and the moduli spaces is a classical result. 

We calculate that the Teichm\"uller space of an $2n$-torus is $\mathrm{Teich}(\mathbb{T}^{2n})=\mathrm{GL}(2n,\R)/\mathrm{GL}(n,\C)$. Indeed, for every complex structure $J$ on $X\simeq \mathbb{T}^{2n}$, we may lift the complex structure  to a complex structure $\tilde{J}$ on $\R^{2n}$ which is invariant under translations by the fundamental group $\pi_1(X)$. Thus there exists an isotopy on $\R^{2n}$ commuting with $\pi_1(X)$ (so it descends to an isotopy on $\mathbb{T}^{2n}$) that takes $\tilde{J}$ to the standard complex structure $J_0$ on $\C^n$. This identifies $X,J$ with $(\C^n,J_0)/\pi_1(X)$ in the Teichm\"uller space $\mathrm{Teich}(\mathbb{T}^{2n})$. The lifting of the $2n$ generators of $\pi_1(X)$ in $\C^n$ may be expressed as an element of $\mathrm{GL}(2n,\R)$. These generators determine an element of the Teichm\"uller space of a $2n$-torus up to complex linear transformation. Thus we have $\mathrm{Teich}(\mathbb{T}^{2n})=\mathrm{GL}(2n,\R)/\mathrm{GL}(n,\C)$.

Diffeomorphisms of the torus up to isotopy are given exactly by automoprhism of the fundamental group, so the mapping class group of an $2n$-torus is $\Gamma(\mathbb{T}^n)=\mathrm{SL}(2n,\Z)$. 

Thus the moduli space of complex structures is given by $\mathrm{Mod}(\mathbb{T}^{2n})=\mathrm{Teich}(\mathbb{T}^{2n})/\Gamma=\mathrm{SL}(2n,\Z)\backslash \mathrm{GL}(2n,\R)/\mathrm{GL}(n,\C)$. 

 By similar arguments, the K\"ahler structures are always equivalent to the standard K\"ahler structure on $(\C^n/\Lambda,J_0,\omega_0)$ where $\Lambda$ is a lattice in $\C^n$. And two lattices induce the same K\"ahler structure if and only if they differ by a unitary transformation and a rescaling. Thus the moduli space of flat K\"ahler structures is $$\mathrm{Kah}(\mathbb{T}^{2n})=\mathrm{SL}(2n,\Z)\backslash \mathrm{GL}(2n,\R)/(\mathrm{U}(n)\times \R)\simeq \mathrm{SL}(2n,\Z)\backslash \mathrm{SL}(2n,\R)/\mathrm{U}(n).$$

We note that the complex moduli space of $\mathbb{T}^{2n}$ is non-Hausdorff for $n>1$ since $\mathrm{GL}(n,\C)$ acts ergodically on $\mathrm{SL}(2n,\Z)\backslash \mathrm{GL}(2n,\R)$. The flat K\"ahler moduli space is Hausdorff since $\mathrm{U}(n)$ is compact.

\section{Geodesic Flow and Foliations on Complex Torus}\label{geodesic}
As for the case of Riemann surfaces where we considered the natural coordinates, for higher dimensional object we may also define the geodesic flow and the horizontal and vertical foliations using the flat structures. In the case of complex tori, the flat structure readily comes from the universal covering $\C^n$.

\subsection{Foliations}

First we briefly introduce the definition of measured foliations.
\begin{defi}
A $n$ dimensional \emph{foliation} $\mathcal{F}$ on a $2n$-dimensional differentiable manifold $M$ is defined as a decomposition of $M$ into a collection of disjoint union of subsets of $M$, called the leaves of $\mathcal{F}$, such that for each $p\in M$ there is a smooth chart from a neighborhood of $p$ to $\C^n$ that takes the leaves to the horizontal planes $(n$-planes parallel to the $x$-axes$)$ with transition maps of the form $(x_1,y_1,...,x_n,y_n)\to (f_1(x_1,y_1,..,x_n,y_n),g_1(y_1,...,y_n),....,$$f_n(x_1,y_1,..,x_n,y_n),g_n(y_1,...,y_n))$. 

A \emph{singular foliation} is a foliation defined outside finitely many points.

A \emph{transverse measure} $\mu$ of a $($singular$)$ foliation $\mathcal{F}$ is a map that assigns a non-negative real number to every smooth $n$-submanifold of $M$ such that $\mu$ is invariant under isotopies of $M$ that preserves each leaf of $\mathcal{F}$ and $\mu$ is absolutely continuous with respect to the $n$-dimensional Lebesgue measure. 

A \emph{measured ($n$-)foliation} $(\mathcal{F},\mu)$ on a $2n$-manifold $M$ is a foliation $\mathcal{F}$ equipped with a transverse measure $\mu$.

A foliation is said to be \emph{uniquely ergodic} if it has only one transverse measure up to scale.
\end{defi}

We shall be interested in very nice foliations, namely the horizontal and vertical foliations on $\mathbb{T}^{2n}$, they are Lagrangian, or even special Lagrangian foliations. First we briefly introduce the definitions (See section 7, part 1 of \cite{CY} for detailed definition and examples on this). 
\begin{defi}
A foliation $\mathcal{F}$ on a K\"ahler manifold $(X,\omega)$ is called \emph{Lagrangian} if the K\"ahler form vanishes when restricted to each leaf $F$, i.e. $\omega|_F=0$. 

In the case that $X$ is equipped with an holomorphic  $n$-form $\Omega$, $($e.g. $X$ is a Calabi-Yau or hyperk\"ahler manifold$)$, we say that a foliation $\mathcal{F}$ is \emph{special Lagrangian} if it is Lagrangian, the imaginary part of $\Omega$ vanishes  and the real part of $\Omega$ is positive when restricted to each leaf $F$, i.e. $\omega|_F=0,\Im(\Omega)|_F=0,\Re(\Omega)|_F>0$ for each leaf $F$.
\end{defi}
%why define this way
Now we define the horizontal and vertical foliations on complex tori in the following way. This is motivated by the definition of horizontal and vertical foliations on Riemann surfaces which is given by the natural coordinates. 

We consider the universal covering of the torus.
For $g=(g_{ij})_{1\le i,j\le 2n}\in \mathrm{SL}(2n,\R)$ we take $\Lambda(g)$ to be the lattice in $\C^n$ given by the row vectors of 
\begin{equation}\label{matrix}\begin{pmatrix} g_{11}+g_{12}i & g_{13}+g_{14} i&...&g_{1,2n-1}+g_{1,2n}i\\ g_{21}+g_{22}i & g_{23}+g_{24}i &...&g_{2,2n-1}+g_{2,2n}i\\\vdots& \vdots&\ddots &\vdots\\ g_{2n,1}+g_{2n,2} i& g_{2n,3}+g_{2n,4} i&...&g_{2n,2n-1}+g_{2n,2n} i\end{pmatrix}
\end{equation} where $i=\sqrt{-1}$. We take $\C^n\to \C^n/\Lambda(g)=X$ the universal covering of the complex torus $X$ given by $g$. The leaves of the horizontal foliation $\mathcal{F}^h$ on $X$ given by $g$ are the images of $\{(x_1+iy_1,...,x_n+iy_n)\in\C^2: y_1=c_1,...,y_n=c_n\}\subset \C^n$ in $X$ and the transverse measure is given by $d\mu_h=dy_1\wedge...\wedge dy_n$. Respectively, the vertical foliation $\mathcal{F}^v$ is given by $\{(x_1+iy_1,...,x_n+iy_n)\in\C^2: x_1=c_1,...,x_n=c_n\}$ in $X$ with transverse measure $d\mu_v=dx_1\wedge...\wedge dx_n$. The horizontal and vertical foliations are transverse Lagrangian foliations on $X$, where the K\"ahler form is given by $\omega_0$ where $\omega_0=\sum dx_i\wedge dy_i$ is the standard one on $\C^n$.

We note that the horizontal and vertical foliations are taken with respect to $g\in\mathrm{SL}(2n,\R)$. If we have a complex torus $X$ with a flat K\"ahler metric $\omega$ to begin with and require that $(X,\omega)=(\C^n/\Lambda(g),\omega_0)$, then there is a $\mathrm{U}(n)$ set of possible choices for $g$ and a $\mathrm{U}(n)/\mathrm{O}(n)$ set of choices for the horizontal and vertical foliations. In fact, in the case $n=2$, every linear special Lagrangian foliation in a flat K\"ahler torus can be given as the horizontal foliation for some choice of $g\in \mathrm{SL}(2n,\R)$ (c.f. Proposition \ref{lagfol}).

\subsection{Geodesic flow}
The geodesic flow is given as expansion of the vertical foliations and contraction of the horizontal foliations. This is motivated by the action of Teichm\"uller flow on natural coordinates. We may also write this more explicitly in the language of a homogeneous flow. 

%motivate difinition
\begin{defi}
We denote by $Ra_t $ the \emph{geodesic flow} on the homogeneous space $\mathrm{SL}(2n,\Z)\backslash \mathrm{SL}(2n,\R)$ starting at $y\in \mathrm{SL}(2n,\Z)\backslash \mathrm{SL}(2n,\R)$ given by $Ra_t(y)= ya_t$ where $$a_t=diag(e^{-t}, e^{t},...,e^{-t}, e^{t})$$ is the diagonal matrix of entries $e^{-t}$ and $e^t$ on the real and imaginary parts respectively.  

For a complex torus $X$ expressed as $\C^n/\Lambda(g)$ for $g\in \mathrm{SL}(2n,\R)$, the geodesic flow (depending on $g$) is defined as $$Ra_t(X,\omega)=(\C^n/\Lambda(Ra_tg),\omega_0)=(\C^n/\Lambda(ga_t), \omega_0)$$ 
\end{defi}

Take $X=\C^n/\Lambda(g)$ and consider the horizontal and vertical foliations of $X$ given by $\C^n\to X$, we see that the geodesic flow $R_{a_t}$ preserves the horizontal and vertical foliations and while contracting and expanding the transverse measures. 

We may project the geodesic flow down to the moduli spaces of the torus by projecting $\mathrm{SL}(2n,\Z)\backslash \mathrm{SL}(2n,\R)$ down to the K\"ahler moduli space $\mathrm{Kah}(\mathbb{T}^{2n})\simeq \mathrm{SL}(2n,\Z)\backslash \mathrm{SL}(2n,\R)/\mathrm{U}(n)$, the K\"ahler moduli space of complex torus. More explicitly, this is given by the right multiplication $Ra_t((\C^n,\omega_0)/\Lambda(g))= (\C^n,\omega_0)/\Lambda(ga_t)$, where $\omega_0$ is the standard K\"ahler form on $\C^n$.

\section{Unique Ergodicity versus Recurrence}\label{versus}
Now we are ready to prove our analogy of Masur's divergence on the case of complex torus. The rough idea is to approximate the transverse measure of a transversal sub-manifold with the number of intersections of the transversal sub-manifold with a higher-dimensional ``segment" of the foliation. 

By the translation structure of torus given by $\C^n/\Lambda(g)$ we may also define the straight line flow on the torus to be the image of the straight line flow on $\C^n$ under the projection. More precisely, the straight line flow from $x \in X=\C^n/\Lambda(g)$ in the direction $v\in\C^n$ is defined as $f^v_t(x+\Lambda(g))=(x+tv)+\Lambda(g)$.

\begin{proof}[Proof of Theorem \ref{masurtorus}]
Suppose $Ra_tX$ is recurrent in the moduli space of K\"ahler structures, we hope to prove unique ergodicity of the horizontal foliation $\mathcal{F}^h$. 

Suppose a subsequence of $Ra_t X$ converges to $X_\infty$ in $\mathrm{Kah}(T^n)$ and take $h_{t_m}:g_{t_m}X\to X_\infty$ to be diffeomorphisms with local differentials of norm close to $1$ for $t_n$ large enough. 

The horizontal foliation is uniquely ergodic if and only if the straight line flows $f^v_t,t\in\R$ are uniquely ergodic, where $v$ is taken over all possible directions in the tangent space $T_pF$ of a leaf $F\in \mathcal{F}$. Note that since the foliation is linear, the tangent space does not depend on the points of the leaves.

We prove by contradiction. Suppose $\mu$ and $\nu$ are different ergodic measures of the straight line flow that give different transverse measures on a sub-manifold $Q_0$ transverse to $\mathcal{F}^h$. Take a generic point $x$ of $\mu$ and a generic point $y$ of $\nu$. For any basis $v_i,1\le i\le n$ of the space given by $\mathcal{F}^h$, and for a flow box $Q=\{\prod_{i=1}^nf^{v_i}_{t_i} x: 0\le t_i\le \delta,x\in Q_0 \}$, we have by the Birkhoff ergodic theorem $$\frac{1}{\prod T^i}\int_0^{T^1}...\int_0^{T^n} 1_Q(\prod(f^{v_i}_{t_i})(x))dt_1...dt_n\to \mu(Q)$$ and $$\frac{1}{\prod T^i}\int_0^{-T^1}...\int_0^{-T^n}1_Q(\prod(f^{v_i}_{t_i})(y))dt_1...dt_n\to \nu(Q)$$ as $T^i\to \infty$ for any $Q\subset X$. 

Take $R$ to be the $2n$-cube with $x_\infty$  and $y_\infty$ as diagonal vertices in $X_\infty$ whose edges are parallel to $v_i$ in $\mathcal{F}^h$ or parallel to $w_i$ in $\mathcal{F}^v$. 

Suppose the lengths of the edges  in $\mathcal{F}^h$ are $l^i$ in the direction $v_i$, and the edges in $\mathcal{F}^v$ are of length at most $l^-$ in the direction $w_i$. Then $h_{t_m}^{-1} R$ is a cube in $g_{t_m} X$. We take $R_m$ to be the cube in $g_{t_m}X$ with diagonal vertices $g_{t_m}x$ and $g_{t_m}y$ whose edges are of lengths $l^i_m$ and $l^-_m$ and directions $v_i$ and $v_-$. By our assumption, $R_m$ is close to $h_{t_m}^{-1} R$ and we have $l^i_m\to l^i$, $l^-_m\to l^-$, as $n\to \infty$. Now consider $g_{t_m}^{-1} R_m\subset X$, it will become a very thin cube with $x$ and $y$ as diagonal vertices and edges of lengths $T_n^i=e^{-t_m} l^i_m\to \infty$ and $e^{t_m}l^-_m\to 0$ as $t_m\to -\infty$. And we see that for a transverse sub-manifold $Q_0$ of $\mathcal{F}^h$ the $n$-face in $\mathcal{F}^h$ starting from $x$ and the $n$-face in $\mathcal{F}^h$ ending with $y$ intersects $Q$ approximately the same amount of times, unless $x$ is very close to the boundary $\partial Q_0$. Take $v_i$ so that the $n$-face of the cube in $\mathcal{F}^h$ is non-degenerate.

Now take $Q$ to be $Q=\{\prod_{i=1}^nf^{v_i}_{t_i} x: 0\le t_i\le \delta,x\in Q_0 \}$. Then we have $1_Q(\prod(f^{v_i}_{t_i})(x))=1_Q(\prod(f^{v_i}_{s_i})(y))$ for $t_i-s_i=T^i_m$ unless $\prod(f^{v_i}_{t_i})(x)\in Q_\epsilon$ where $Q_\epsilon=\{\prod_{i=1}^nf^{v_i}_{t_i} x: 0\le t_i\le \delta,x\in Q_0,dist(x,Q_0)<\epsilon\}$ and $\epsilon\ge ce^{-t_m}$. Here $c$ depends only on $v_-$ and $y$. Thus we have $$\mid \frac{1}{\prod T^i_m}\int_0^{T^1_m}...\int_0^{T^n_m} (1_Q(\prod(f^{v_i}_{t_i})(x))-1_Q(\prod(f^{v_i}_{t_i-T^i_m})(y)))dt_1...dt_n \mid $$$$\le \frac{1}{\prod T^i_m}\int_0^{T^1_m}...\int_0^{T^n_m} 1_{Q_\epsilon}(\prod(f^{v_i}_{t_i})(x))dt_1...dt_n\to \mu(Q_\epsilon)$$ as $n\to \infty$. Thus we get $|\mu(Q)-\nu(Q)|\le |\mu(Q_\epsilon)|$ for any distinct ergodic probability measures $\mu$ and $\nu$. Since $\mu$ supports the Lebesgue measure, we have $\mu(Q_\epsilon)\to 0$ as $\epsilon\to 0$. This contradicts to our assumption. Thus the horizontal foliation $\mathcal{F}^h$ can only have a unique transverse measure up to scale. This proves our theorem.
\end{proof}

Now we shall consider in greater details when exactly are foliations uniquely ergodic and when are the corresponding geodesic flow recurrent.

For complex tori, the unique ergodicity of irrational straight line flow is a classical result.
\begin{thm}\label{slftorus} The straight line flow on a complex tori is uniquely ergodic if and only if it is totally irrational, i.e. the direction of the straight line flow is not in the span of any lower-dimensional sub-lattice of $\Lambda(g)$.
\end{thm}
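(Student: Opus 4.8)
The plan is to reduce the statement to the classical equidistribution theory of linear flows on the standard torus $\R^{2n}/\Z^{2n}$, via the linear change of coordinates that trivializes the lattice. Identifying $\C^n$ with $\R^{2n}$ as in \eqref{matrix}, the lattice $\Lambda(g)$ is the image of $\Z^{2n}$ under right multiplication by $g$, so $x\mapsto xg^{-1}$ is a linear automorphism of $\R^{2n}$ carrying $\Lambda(g)$ onto $\Z^{2n}$ and descending to a diffeomorphism $\bar A\colon X=\C^n/\Lambda(g)\to \mathbb{T}^{2n}:=\R^{2n}/\Z^{2n}$. This diffeomorphism conjugates the straight line flow $f^v_t$ on $X$ to the linear flow $L^w_t(x)=x+tw$ on $\mathbb{T}^{2n}$, where $w=vg^{-1}$, and it sets up a bijection between invariant Borel probability measures on the two sides. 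Hence $f^v_t$ is uniquely ergodic if and only if $L^w_t$ is. First I would record the dictionary between the two rationality conditions: the $\R$-span of a rank-$k$ sublattice of $\Lambda(g)$ is a $\Lambda(g)$-rational subspace of dimension $k$, and conversely every proper $\Lambda(g)$-rational subspace $W$ meets $\Lambda(g)$ in a sublattice of rank $\dim W<2n$ spanning $W$; pushing forward by the change of coordinates, ``$v$ lies in the span of a lower-dimensional sublattice of $\Lambda(g)$'' is equivalent to the existence of a nonzero $k\in\Z^{2n}$ with $\langle k,w\rangle=0$. Negating, total irrationality of $v$ is equivalent to the coordinates of $w$ being linearly independent over $\Q$.

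For the ``if'' direction, suppose the coordinates of $w$ are $\Q$-linearly independent and let $\mu$ be any $L^w_t$-invariant Borel probability measure on $\mathbb{T}^{2n}$. Invariance of $\mu$ gives, for each $k\in\Z^{2n}$ and all $t$, the identity $\hat\mu(k)=e^{-2\pi i t\langle k,w\rangle}\hat\mu(k)$ for the Fourier coefficient $\hat\mu(k)=\int e^{-2\pi i\langle k,x\rangle}\,d\mu(x)$; for $k\neq 0$ we have $\langle k,w\rangle\neq 0$, so $\hat\mu(k)=0$. Thus $\mu$ has exactly the Fourier coefficients of normalized Haar measure, and since the characters span a dense subalgebra of $C(\mathbb{T}^{2n})$ by Stone--Weierstrass, $\mu$ equals Haar measure. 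Therefore $L^w_t$, and hence $f^v_t$, is uniquely ergodic.

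For the ``only if'' direction I would argue the contrapositive: if $v$ is not totally irrational, choose a nonzero $k\in\Z^{2n}$ with $\langle k,w\rangle=0$. Then $H:=\overline{\{tw:t\in\R\}}$ is a proper subtorus of $\mathbb{T}^{2n}$ containing the orbit closure of the origin, and the normalized Haar measures of $H$ and of a translate $H+x_0$ with $x_0\notin H$ are two distinct $L^w_t$-invariant probability measures; pulling back by $\bar A$ produces two distinct $f^v_t$-invariant measures, so $f^v_t$ is not uniquely ergodic. (Equivalently, $e^{2\pi i\langle k,x\rangle}$ is a nonconstant flow-invariant function, so Haar measure is not ergodic and its ergodic decomposition already exhibits at least two distinct invariant ergodic measures.)

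The statement is classical (Weyl, Kronecker), so there is no genuine obstacle; the only care required is in the bookkeeping of the lattice/rational-subspace dictionary and in ensuring that the non-uniquely-ergodic side truly exhibits two distinct invariant measures rather than mere non-ergodicity, which the subtorus construction handles cleanly.
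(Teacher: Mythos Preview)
Your argument is correct and is precisely the classical Weyl--Kronecker proof: conjugate to the standard torus, use Fourier coefficients to force any invariant measure to be Haar when the direction is rationally independent, and exhibit distinct invariant measures supported on translates of a proper subtorus otherwise. The paper does not supply its own proof of this theorem; it is stated as a classical result and used as input for Corollary~\ref{unierg}, so there is nothing to compare against beyond noting that your write-up fills in exactly the standard argument the author is invoking.
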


Thus for linear foliation we have the following corollary.
\begin{cor} \label{unierg}The horizontal foliation is either uniquely ergodic and thus minimal, or non-minimal where each leaf is rational in the sense that it is contained in the span of any lower-dimensional sub-lattice of $\Lambda(g)$.
\end{cor}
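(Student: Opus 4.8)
The plan is to reduce the statement to Theorem \ref{slftorus} using the linear structure of $X=\C^n/\Lambda(g)$. Identify $\C^n$ with $\R^{2n}$ as in \eqref{matrix} and let $V\subset\R^{2n}$ be the $n$-dimensional subspace spanned by the real coordinate directions, so the leaves of $\mathcal{F}^h$ are exactly the images of the affine planes $p+V$, $p\in\R^{2n}$, and the tangent space of every leaf is $V$. Call a linear subspace $W\subset\R^{2n}$ \emph{rational} if it is spanned by $W\cap\Lambda(g)$; then a lower-dimensional sublattice of $\Lambda(g)$ is the same as $W\cap\Lambda(g)$ for a proper rational $W$. The whole argument is a dichotomy according to whether $V$ is contained in some proper rational subspace.

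\textbf{Case 1: $V$ is not contained in any proper rational subspace.} First I would produce a totally irrational vector $v_0\in V$ in the sense of Theorem \ref{slftorus}: a vector $v\in V$ fails to be totally irrational exactly when it lies in one of the countably many proper rational subspaces $W$, and for each such $W$ the hypothesis forces $W\cap V\subsetneq V$; since a nonzero real vector space is not a countable union of proper subspaces, some $v_0\in V$ is totally irrational. By Theorem \ref{slftorus} the straight line flow $f^{v_0}_t$ on $X$ is uniquely ergodic, and since Haar measure is invariant under $x\mapsto x+tv_0$ its unique invariant probability measure is Haar; in particular the $v_0$-orbit of every point is dense, hence every leaf of $\mathcal{F}^h$ is dense and $\mathcal{F}^h$ is minimal. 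For unique ergodicity, given a transverse measure $\mu$ of $\mathcal{F}^h$ I would build a measure $\tilde\mu$ on $X$ which in each flow box $U\cong P\times T$ (a $V$-plaque $P$ times a transversal $T$) is $\mathrm{Leb}_P\otimes\mu|_T$; this is well defined across flow boxes because the foliation is linear, so leafwise Lebesgue measure is intrinsic, and $\mu$ is invariant under leaf-preserving isotopies and absolutely continuous. By compactness of $X$, $\tilde\mu$ is finite, and it is invariant under $f^{v_0}_t$ since $v_0\in V$. Unique ergodicity of $f^{v_0}_t$ forces $\tilde\mu=c\cdot\mathrm{Haar}$; writing Haar in linear coordinates adapted to $V$ as a product of Lebesgue measures and comparing in a flow box gives $\mu|_T=c\cdot\mathrm{Leb}_T$ on every transversal, i.e.\ $\mu$ is a scalar multiple of the standard transverse measure. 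Hence $\mathcal{F}^h$ is uniquely ergodic.

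\textbf{Case 2: $V\subset W$ for some proper rational subspace $W$.} Then $W\cap\Lambda(g)$ is a lower-dimensional sublattice whose span contains $V$, so $V$ — and therefore the direction of every leaf — is rational in the sense of Theorem \ref{slftorus}. Moreover every leaf is contained in a translate of the proper subtorus $\pi(W)\subset X$ (with $\pi:\R^{2n}\to X$ the projection), so no leaf is dense and $\mathcal{F}^h$ is non-minimal. This is the second alternative, and since every $g$ falls into exactly one of the two cases, the corollary follows.

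The step I expect to be the main obstacle is the correspondence in Case 1 between transverse measures of $\mathcal{F}^h$ and $f^{v_0}_t$-invariant measures on $X$: one must check carefully that $\tilde\mu$ is well defined and locally finite across flow boxes and, conversely, that $\mu$ is faithfully recovered from $\tilde\mu$ by disintegration along the $V$-plaques. The remaining point requiring attention — that a totally irrational direction can be found \emph{inside} the plane $V$ and not merely somewhere in $\R^{2n}$ — is elementary but genuinely needed, since the naive quantifier over all directions of a leaf is too strong.
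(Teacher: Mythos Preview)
Your proof is correct and follows the same dichotomy as the paper: split according to whether the horizontal plane $V$ lies in a proper rational subspace, and in the irrational case invoke Theorem \ref{slftorus} on a totally irrational direction inside $V$. Your version is considerably more careful than the paper's. The paper simply asserts that such a direction exists and that unique ergodicity of the line flow yields unique ergodicity of the foliation, without the countability argument or the transverse-measure-to-invariant-measure construction you outline; your worry about that correspondence is the one nontrivial step, and the paper skips it entirely. In the rational case the paper argues non-unique-ergodicity directly---any measure on a complementary rational subtorus gives a transverse measure---rather than non-minimality as you do; either route establishes the second alternative.
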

\begin{proof} If the horizontal foliation is contained in a subspace $V\subset \R^{2n}$ which is the span of a lower-dimensional sub-lattice of $\Lambda(g)$ then it cannot be uniquely ergodic: take $V^\prime$ is a subspace of $\R^{2n}$ spanned by another sub-lattice of $\Lambda(g)$ so that $V^\prime\oplus V=\R^{2n}$, then any measure on $V^\prime$ defines a transverse measure of the horizontal foliation.

However if the horizontal foliation is contained any rational subspace then there must be a direction in the foliation that is totally irrational, so by the unique ergodicity of totally irrational straight line flows the foliation is also uniquely ergodic.

\end{proof}

On the moduli space of flat K\"ahler metrics, however, there doe not exist such a clear distinction between divergent and non-divergent geodesic flows. However, we do know that geodesic flows generated by horizontal foliations are``almost always" ergodic by the following theorem.

\begin{thm}[See \cite{MooErg}] The right multiplication $Ra_t$ is ergodic on $\mathrm{SL}(2n,\Z)\backslash \mathrm{SL}(2n,\R)$ with respect to the Haar measure. 
\end{thm}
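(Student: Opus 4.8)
The plan is to recognize this as a special case of Moore's ergodicity theorem and to prove it through the unitary representation of $G=\mathrm{SL}(2n,\R)$ on $L^2(\Gamma\backslash G)$, where $\Gamma=\mathrm{SL}(2n,\Z)$. Since $\Gamma$ is a lattice in $G$, the homogeneous space $\Gamma\backslash G$ carries a finite $G$-invariant (Haar) measure, which we normalize to a probability measure, and $Ra_t$ acts on it measurably and measure-preservingly. Ergodicity of $Ra_t$ is then equivalent to the assertion that any $f\in L^2(\Gamma\backslash G)$ fixed by the right regular action $\pi(a_t)$ of every $a_t$ must be a.e. constant. So I would fix such an $f$ and argue that it is in fact $G$-invariant; since $G$ acts transitively on $\Gamma\backslash G$ by right multiplication, this forces $f$ to be a.e. constant.

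The engine is the Mautner phenomenon: in a unitary representation $\pi$, if $\pi(a_1)v=v$ and $u\in G$ satisfies $a_1^{\,m}u\,a_1^{-m}\to e$ as $m\to+\infty$ (or as $m\to-\infty$), then $\pi(u)v=v$; indeed $\|\pi(u)v-v\|=\|\pi(a_1^{\,m})(\pi(u)v-v)\|=\|\pi(a_1^{\,m}u\,a_1^{-m})v-v\|\to 0$ by strong continuity. To apply this I would write $a_t$ in $2\times2$ block-diagonal form $a_t=\bigoplus_{i=1}^n\mathrm{diag}(e^{-t},e^{t})$; conjugation by $a_t$ multiplies the $(k,l)$ matrix entry by $e^{\mu_k-\mu_l}$ with $\mu_k=-t$ for $k$ odd and $\mu_k=t$ for $k$ even. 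Hence the stable horospherical subgroup $W^-=\{u: a_tua_{-t}\to e,\ t\to+\infty\}$ is the (abelian, $n^2$-dimensional) unipotent group of matrices whose off-diagonal entries are supported on positions $(k,l)$ with $k$ odd and $l$ even, and $W^+$ is its transpose. Mautner then yields $\pi(u)f=f$ for every $u\in W^-\cup W^+$.

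The remaining point — the one genuinely specific to this flow, which I regard as the main technical step — is that $W^-$ and $W^+$ together generate all of $G$ even though $a_t$ is far from regular (its eigenvalues are repeated $n$ times). On the Lie algebra level, $\mathfrak{w}^-$ is spanned by the $E_{kl}$ with $k$ odd, $l$ even and $\mathfrak{w}^+$ by the $E_{kl}$ with $k$ even, $l$ odd; brackets such as $[E_{12},E_{23}]=E_{13}$ and $[E_{12},E_{21}]=E_{11}-E_{22}$, together with their obvious variants, recover all the ``centralizer'' directions $E_{kl}$ with $k\equiv l\pmod 2$ as well as the full traceless Cartan, so $\mathfrak{w}^-+\mathfrak{w}^+$ generates $\mathfrak{sl}(2n,\R)$ as a Lie algebra. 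Since $G$ is connected, the subgroup $\langle W^-,W^+\rangle$ is therefore $G$; combined with strong continuity of $\pi$, this gives $\pi(g)f=f$ for all $g\in G$, and transitivity of the right $G$-action on $\Gamma\backslash G$ shows $f$ is a.e. constant, which is the desired ergodicity.

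If one is willing to quote more, the whole argument compresses via the Howe–Moore theorem: $L^2_0(\Gamma\backslash G)$, the orthocomplement of the constants, has no nonzero $G$-invariant vectors, so its matrix coefficients $g\mapsto\langle\pi(g)f,f\rangle$ vanish as $g\to\infty$ in $G$; since $a_t\to\infty$, an $a_t$-invariant $f\in L^2_0$ would satisfy $\|f\|^2=\langle\pi(a_t)f,f\rangle\to 0$, forcing $f=0$. In either route, the only structural facts used are that $\mathrm{SL}(2n,\R)$ is connected semisimple with finite center and no compact factors, and that $\mathrm{SL}(2n,\Z)$ is a lattice in it.
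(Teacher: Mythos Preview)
The paper does not give a proof of this theorem; it is quoted as a known result with a citation to Moore, and is used only to conclude that almost every geodesic flow is recurrent. Your argument is correct and is exactly the standard proof via the Mautner phenomenon (or, in its compressed form, the Howe--Moore vanishing of matrix coefficients). The only point specific to this particular one-parameter subgroup, namely that the horospherical subgroups $W^-$ and $W^+$ still generate all of $\mathrm{SL}(2n,\R)$ even though $a_t$ has repeated eigenvalues, is handled correctly by your bracket computation $[E_{12},E_{23}]=E_{13}$, $[E_{21},E_{14}]=E_{24}$, $[E_{12},E_{21}]=E_{11}-E_{22}$, etc., which recovers the centralizer directions and the Cartan.
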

This implies that almost every geodesic flow is dense in the K\"ahler moduli space, hence recurrent.

The converse of Theorem \ref{masurtorus} is not true: there exists divergent geodesic flows that are generated by uniquely ergodic horizontal and vertical foliations. For example, if a leaf of the horizontal foliation contains a closed curve in the torus with non-trivial topology then the geodesic flow must be divergent in the moduli space of flat metrics. However, the foliation itself may still be uniquely ergodic as long as one direction in the foliation is totally irrational. One example is given as follows.\begin{prop}\label{exp}
Take $$g=\begin{pmatrix} 1 &0&0&0\\ 0&1&0&0\\ \sqrt{3} &\sqrt{5} & 1&\sqrt{2}  \\ 0&0&0 &1\end{pmatrix}$$ and $X=\C^2/\Lambda(g)$.
Then the horizontal foliation of $X=\C^2/\Lambda(g)$ is uniquely ergodic, but the geodesic flow $Ra_t (X,\omega_0)\in \mathrm{Kah}(\mathbb{T}^4)$ is divergent.
\end{prop}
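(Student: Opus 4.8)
The plan is to verify the two assertions separately, each by a short computation with the lattice $\Lambda:=\Lambda(g)$. Reading the rows of the matrix \eqref{matrix} in the real coordinates $(x_1,y_1,x_2,y_2)$ on $\R^4\cong\C^2$, the generators of $\Lambda$ are $e_1=(1,0,0,0)$, $e_2=(0,1,0,0)$, $u=(\sqrt3,\sqrt5,1,\sqrt2)$ and $e_4=(0,0,0,1)$, where $e_1,\dots,e_4$ is the standard basis; one checks $\det g=1$, so $g\in\mathrm{SL}(4,\R)$. Thus the tangent plane of the horizontal foliation $\mathcal{F}^h$ is $H=\mathrm{span}(e_1,e_3)$ and that of the vertical foliation is $\mathrm{span}(e_2,e_4)$. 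I would get unique ergodicity of $\mathcal{F}^h$ by exhibiting one totally irrational direction inside $H$, and divergence of $Ra_t X$ by exhibiting a nonzero vector of $\Lambda$ whose $a_t$-image shrinks to $0$.

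For the first part: by Theorem \ref{slftorus} and Corollary \ref{unierg}, a single totally irrational direction tangent to $\mathcal{F}^h$ suffices, since such a direction gives a dense orbit inside every leaf, so $\mathcal{F}^h$ is minimal and, being linear, uniquely ergodic. I would take the direction $e_3\in H$. Since $\Lambda=\Z^4 g$ as a lattice of row vectors, a direction $w$ is totally irrational with respect to $\Lambda$ exactly when $wg^{-1}$ is totally irrational with respect to $\Z^4$, i.e.\ when the entries of $wg^{-1}$ are linearly independent over $\Q$. Writing $g=I+E$ with $E$ supported on the third row, one has $E^2=0$, so $g^{-1}=I-E$, whose third row is $(-\sqrt3,-\sqrt5,1,-\sqrt2)$; hence $e_3g^{-1}=(-\sqrt3,-\sqrt5,1,-\sqrt2)$, and its entries are $\Q$-linearly independent because $1,\sqrt2,\sqrt3,\sqrt5$ are, a standard fact. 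Therefore $e_3$ is totally irrational and $\mathcal{F}^h$ is uniquely ergodic.

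For the divergence: the second generator $e_2=(0,1,0,0)$ of $\Lambda$ gives $e_2 a_t=(0,e^t,0,0)\in\Lambda(g a_t)=\Lambda a_t$, of length $e^t\to 0$ as $t\to-\infty$. So the shortest nonzero vector of the unimodular lattice $\Lambda(g a_t)$ has length tending to $0$, and by Mahler's compactness criterion the points $\Lambda(g a_t)$ eventually leave every compact subset of $\mathrm{SL}(4,\Z)\backslash\mathrm{SL}(4,\R)$. Since $\mathrm{Kah}(\mathbb{T}^4)=\mathrm{SL}(4,\Z)\backslash\mathrm{SL}(4,\R)/\mathrm{U}(2)$ is a quotient by the compact group $\mathrm{U}(2)$, the projection is proper, so $Ra_t X$ leaves every compact subset of $\mathrm{Kah}(\mathbb{T}^4)$ as $t\to-\infty$; in particular $\{Ra_t X:t\le 0\}$ is divergent, which is what contradicts the converse of Theorem \ref{masurtorus}. (The same computation with $e_1\in\Lambda$, which spans a closed leaf of $\mathcal{F}^h$, shows $Ra_t X$ also diverges as $t\to+\infty$.)

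I do not expect a serious obstacle here; the proposition is essentially a matter of choosing the right lattice vectors. The two points that need care are: first, passing correctly, via $\Lambda=\Z^4 g$, from the statement that $e_3$ is a totally irrational direction of $\mathcal{F}^h$ to the concrete $\Q$-linear independence of the entries of $e_3 g^{-1}$; and second, observing that although $H$ also contains the periodic direction $e_1$, the presence of a single totally irrational direction inside $H$ already forces unique ergodicity by Corollary \ref{unierg}. So a closed leaf of the horizontal foliation does not prevent the foliation from being uniquely ergodic, even though it does force the geodesic flow to diverge, and this is precisely why the converse of Theorem \ref{masurtorus} fails.
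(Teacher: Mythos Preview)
Your proof is correct and follows essentially the same approach as the paper: use the direction $e_3=(0,0,1,0)$ inside the horizontal plane as a totally irrational direction to get unique ergodicity via Corollary~\ref{unierg}, and exhibit a lattice vector whose $a_t$-image shrinks to $0$ to get divergence. The only cosmetic difference is that the paper cites the vector $(1,0,0,0)$ shrinking as $t\to+\infty$, whereas you primarily use $e_2=(0,1,0,0)$ shrinking as $t\to-\infty$ (which is in fact the direction relevant to the converse of Theorem~\ref{masurtorus}); your added details---the explicit computation of $e_3 g^{-1}$, the appeal to Mahler's criterion, and the properness of the quotient by $\mathrm{U}(2)$---make the argument more complete than the paper's terse version.
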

\begin{proof}
The horizontal foliation is unique ergodic since $(0,0,1,0)$ is a totally irrational linear sum of the row vectors of $g$. Thus the straight line flow in this direction is uniquely ergodic.

However, the geodesic flow starting at $g$ is divergent since the length of the closed curve given by the vector $(1,0,0,0)$ tends to zero as $t\to \infty$. 
\end{proof}

\section{Preliminaries on Kummer Surface}\label{kum}
Next, we further exploit the form and property of the geodesic flow for Kummer surfaces in complex dimension 2. In this case, we may, by the Torelli theorem, explicitly calculate the geodesic flow and deduce some properties. First we introduce some preliminaries on Kummer surfaces and K3 surfaces, period map and the complex and generalized K\"ahler moduli spaces. 
\subsection{K3 surfaces and its moduli spaces}

Here we introduce the definition and basic properties and Kummer surface and K3 surface, including the Torelli theorem and the denseness of Kummer surface in the complex moduli space of K3.

\begin{defi} A \emph{K3 surface} is a complex surface $X$ which is simply connected and has a unique $($up to scale$)$ non-vanishing holomorphic $2$-form $\Omega_X$. 
\end{defi}

\begin{thm}[See Theorem 7.1.1 of \cite{HuyK3}] All K3 surfaces are diffeomorphic.  
\end{thm}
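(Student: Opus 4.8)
The plan is to prove the a priori stronger statement that any two K3 surfaces are \emph{deformation equivalent} --- that is, connected by a finite chain of smooth proper holomorphic families over connected bases --- and then to invoke Ehresmann's fibration theorem, which turns a smooth proper family over a connected base into a locally trivial $C^\infty$ fibre bundle, to conclude that the two surfaces are diffeomorphic. The inputs I would use are all classical for K3 surfaces: every K3 surface is Kähler (Siu); the canonical bundle is trivial, so that $T_X\cong\Omega^1_X$ and hence $H^0(X,T_X)=H^2(X,T_X)=0$ while $\dim_\C H^1(X,T_X)=h^{1,1}(X)=20$; consequently $X$ has a smooth (unobstructed) universal Kuranishi deformation $\mathcal X\to U$ over a $20$-dimensional polydisc; the local Torelli theorem, that the period map sending a fibre $X_u$ to the line spanned by its holomorphic two-form is a local biholomorphism; and the connectedness of the period domain
\[
\Omega=\{\,[\sigma]\in\mP(\Lambda\otimes\C)\ :\ \sigma\cdot\sigma=0,\ \sigma\cdot\bar\sigma>0\,\},
\]
where $\Lambda$ is the K3 lattice, i.e.\ $H^2(X,\Z)$ with its intersection form.

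First I would reduce to the projective case. Fixing a marking $\varphi\colon H^2(X,\Z)\xrightarrow{\ \sim\ }\Lambda$, the local Torelli theorem identifies a neighbourhood $V$ of the period point $\varphi([\Omega_X])$ in $\Omega$ with the image of the Kuranishi family of $X$. For any primitive $\ell\in\Lambda$ with $\ell^2>0$ the hypersurface $\ell^\perp\cap\Omega$ is nonempty, and the union $\bigcup_{\ell}(\ell^\perp\cap\Omega)$ over all such $\ell$ is dense in $\Omega$; hence $V$ meets some $\ell^\perp$, and the corresponding small deformation $X_u$ of $X$ carries an integral $(1,1)$-class of positive square, which by the Nakai--Moishezon criterion --- using the triviality of the canonical bundle and the fact that $(-2)$-classes are represented by exceptional curves --- makes $X_u$ projective. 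Thus $X$ is deformation equivalent to a projective K3 surface.

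It then remains to connect any two projective K3 surfaces. A projective K3 admits a primitive polarisation of some even degree $2d=\ell^2>0$, and the coarse moduli space of primitively polarised K3 surfaces of fixed degree $2d$ is irreducible --- a consequence of the global Torelli theorem and of the fact that the relevant arithmetic quotient of the polarised period domain is connected --- so all degree-$2d$ K3 surfaces are deformation equivalent to a single model, for instance a smooth quartic in $\mP^3$ when $2d=4$. To pass between polarisation degrees one uses the connectedness of $\Omega$ itself: choose a path in $\Omega$ joining the period of a polarised K3 to that of a quartic, cover it by finitely many Kuranishi charts, and read off from the overlaps a chain of deformation equivalences. (Alternatively one may connect everything through Kummer surfaces, using that every K3 is deformation equivalent to a Kummer surface by the density of Kummer periods and that Kummer surfaces of abelian surfaces form a connected family parametrised by the connected moduli space of abelian surfaces.) Combining the three steps, all K3 surfaces lie in a single deformation-equivalence class, hence are diffeomorphic.

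The step I expect to be the main obstacle is this last globalisation of the local deformation-theoretic picture: the moduli space of marked K3 surfaces is non-Hausdorff, so a path in $\Omega$ cannot simply be lifted to a family, and one must instead argue with overlapping Kuranishi charts so that the non-separated points never break the chain of deformation equivalences --- or bypass the issue by working in the projective setting, where the moduli problem is represented by a separated Deligne--Mumford stack. A secondary point requiring care is the implication that a K3 surface with a positive integral $(1,1)$-class is projective, which rests on Riemann--Roch for surfaces together with the triviality of the canonical bundle.
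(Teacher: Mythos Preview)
The paper does not supply its own proof of this theorem; it merely cites Theorem~7.1.1 of \cite{HuyK3} and immediately uses the conclusion to fix the common underlying differentiable manifold $M_0$. So there is no ``paper's own proof'' to compare against.

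Your proposal is essentially the classical argument that appears in the cited reference: reduce to deformation equivalence, invoke Ehresmann, and establish deformation equivalence by (i) deforming an arbitrary K3 to a projective one via density of the loci $\ell^\perp\cap\Omega$ in the period domain, and (ii) connecting any two projective K3 surfaces. The overall strategy is sound, and you correctly flag the two points that require genuine work. A small sharpening: the passage from ``has an integral $(1,1)$-class of positive square'' to ``is projective'' is not quite Nakai--Moishezon directly; one first uses Riemann--Roch plus $K_X\cong\mathcal{O}_X$ to produce an effective class, and then the Weyl group action by reflections in $(-2)$-curves to land in the ample cone. For step~(ii), rather than lifting a path in $\Omega$ through overlapping Kuranishi charts (where, as you note, non-Hausdorffness makes the bookkeeping delicate), the cleaner route in the literature is to observe that within each polarised family one can specialise to an elliptic K3 with section, and that all such elliptic K3 surfaces are connected through their Weierstrass models; this avoids moving between polarisation degrees via $\Omega$ altogether. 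Either way, your outline is a correct proof sketch, and it is the same proof the paper is pointing to by citation.
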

From now on we denote the differentiable manifold underlying a K3 surface by $M_0$. For any K3 surface $X$, we define a marking $f$ of $X$ to be a diffeomorphism $f: M_0\to X$. A marking $f$ identifies the homology space $H_2(X,\Z)$ with $H_2(M_0,\Z)$ through $f_*$.

\begin{defi}[Period Domain] Let $M_0$ be the differentiable manifold underlying K3 surfaces. The period domain of K3 surfaces is the set of Hodge structure on $H^2(M_0,\C)$, denoted as $\mathrm{Per}(M_0)=\{f^*(H^{2,0}(X,\C))\subset H^2(M_0,\C):f: M_0\to X \text{ a diffeomorphism}\} \subset \mathbb{P}H^2(M_0,\C)$.

The period map from the Teichm\"uller space of M to the period domain is given by $\mathrm{Per}: \mathrm{Teich} (M_0)\to \mathrm{Per}(M_0): [(f,X)]\to f^*(H^{2,0}(X,\C))$.
\end{defi}

The relation between the period domain, the Teichm\"uller space and the moduli space of complex structures of K3 surfaces is described by the Torelli theorem.
\begin{thm}[Torelli Theorem]
The period domain of K3 surfaces is given by $\mathrm{Per}(K3)=\{\alpha\in \mathbb{P}H^2(M_0,\C):\alpha\cdot\alpha=0,\alpha\cdot \bar{\alpha}>0\}. $ The period map $\mathrm{Per}$ is a local isomorphism. Two complex K3 surfaces $X$ and $X^\prime$ are bi-holomorphic if and only if there exists a Hodge isometry $H^2(X,\Z)\to H^2(X^\prime,\Z)$.
\end{thm}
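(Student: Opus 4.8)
The plan is to prove the three assertions in turn: the explicit description of the period domain, the local-isomorphism property, and the Hodge-isometry criterion for biholomorphism. One containment in the first assertion is elementary. For a K3 surface $X$ with holomorphic $2$-form $\Omega_X$ and a marking $f$, set $\alpha=f^*[\Omega_X]$. Since $\Omega_X$ has type $(2,0)$ on a surface, $\Omega_X\wedge\Omega_X$ is a $(4,0)$-form and vanishes, so $\alpha\cdot\alpha=0$; and $\Omega_X\wedge\bar\Omega_X$ is a positive multiple of a volume form, so $\alpha\cdot\bar\alpha>0$. This gives $\mathrm{Per}(M_0)\subseteq\{\alpha:\alpha\cdot\alpha=0,\ \alpha\cdot\bar\alpha>0\}$; the reverse inclusion is the surjectivity of the period map, which I treat together with local Torelli below.

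For local Torelli I would differentiate $\mathrm{Per}$ using Kodaira--Spencer theory. The tangent space to $\mathrm{Teich}(M_0)$ at $[X]$ is $H^1(X,T_X)$, and since deformations of K3 surfaces are unobstructed, $\mathrm{Teich}(M_0)$ is smooth of complex dimension $20=\dim H^1(X,T_X)$. Contraction with $\Omega_X$ gives an isomorphism $T_X\cong\Omega^1_X$, and under the resulting identification the derivative of the period map becomes the cup-product pairing $H^1(X,T_X)\to \mathrm{Hom}(H^{2,0}(X),H^{1,1}(X))$, which one checks is an isomorphism onto the tangent space of the period domain at $[\Omega_X]$. Hence $\mathrm{Per}$ is a local isomorphism. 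The target $\{\alpha\cdot\alpha=0,\ \alpha\cdot\bar\alpha>0\}\subset\mathbb{P}H^2(M_0,\C)$ is a connected complex manifold of dimension $20$; because the period map is a local isomorphism its image is open, and a properness argument bounding the periods along degenerating families shows the image is also closed, which yields surjectivity and finishes the description of $\mathrm{Per}(M_0)$.

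The heart of the theorem, and the main obstacle, is the global statement: building a biholomorphism from a Hodge isometry $\phi\colon H^2(X,\Z)\to H^2(X',\Z)$. I would argue through Kummer surfaces, whose density in the moduli space is available. First I would establish the statement directly for Kummer surfaces: the weight-two Hodge structure of $\mathrm{Kum}(A)$ on its transcendental lattice reproduces the Hodge structure on $H^2$ of the underlying torus $A$, which by the Torelli theorem for complex tori recovers $A$ itself. A Hodge isometry of two Kummer surfaces that respects the classes of the exceptional curves therefore descends to an isomorphism of the underlying tori and lifts back to a biholomorphism of the Kummer surfaces inducing $\phi$. Then, using local Torelli and the density of the Kummer locus, I would extend the conclusion to an arbitrary K3 surface $X$ by approximating it by Kummer surfaces and passing to the limit.

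The delicate point --- where the genuine work lies --- is that the moduli space of marked K3 surfaces is non-Hausdorff, so the limiting argument cannot be run naively, and a Hodge isometry need not respect the exceptional classes on a Kummer surface. The standard remedy is to first reduce to an \emph{effective} Hodge isometry, one carrying a K\"ahler class of $X$ to a K\"ahler class of $X'$. This is arranged by composing $\phi$ with $\pm 1$ and with reflections in classes of self-intersection $-2$, which act as Hodge isometries and whose chambers are the K\"ahler cones; transitivity of this reflection (Weyl) group on the chambers is what makes the reduction possible. Once $\phi$ is effective the relevant period points become separated in the moduli space, so the Kummer approximation converges to a genuine biholomorphism. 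Verifying the transitivity of the reflection group and that effectivity forces Hausdorff separation is the technical crux of the argument.
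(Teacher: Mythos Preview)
The paper does not prove this theorem. It is stated in Section~\ref{kum} as a classical background result (the Torelli theorem for K3 surfaces, due to Piateckii-Shapiro--Shafarevich, Burns--Rapoport, with surjectivity by Todorov and Siu), and is then used without further argument. There is therefore no ``paper's own proof'' to compare your proposal against.

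That said, your outline is broadly the classical one, but it contains a genuine gap. Your argument for surjectivity of the period map --- ``a properness argument bounding the periods along degenerating families shows the image is also closed'' --- does not work. The period map from the space of marked K3 surfaces to the period domain is \emph{not} proper: the source is non-Hausdorff, and families of K3 surfaces can degenerate while their periods stay bounded in the period domain (this is exactly the phenomenon of inseparable points differing by $(-2)$-reflections that you invoke later). Surjectivity was in fact established only after the global Torelli theorem, by quite different methods: Todorov's argument via density of Kummer surfaces together with the global Torelli statement, and independently Siu's argument using Yau's solution of the Calabi conjecture and hyperk\"ahler twistor lines. So the logical order in your sketch is inverted, and the mechanism you propose for surjectivity is not available. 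The remainder of your outline (local Torelli via the contraction $T_X\cong\Omega^1_X$, and the Burns--Rapoport strategy of reducing to effective Hodge isometries and approximating by Kummer surfaces) is a reasonable summary of the standard approach.
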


Noting that mapping class group is generated by change of markings on the Teichm\"uller space, the Torelli theorem give us the following description of the moduli space. 
\begin{cor}The mapping class group of K3 surfaces is given by $\Gamma=O(H^2(M_0,\Z))$ which is a lattice in $O(H^2(M_0,\R))$. The moduli space of complex structures of K3 surfaces is $\mathrm{Mod}(K3)=\Gamma\backslash \mathrm{Per}(K3)$. 
\end{cor}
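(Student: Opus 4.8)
The plan is to deduce both equalities from the global Torelli theorem stated above, together with two classical facts from the differential topology of K3 surfaces, using the period map as the bridge.

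First, the identification $\Gamma\cong O(H^2(M_0,\Z))$. Every $\phi\in\mathrm{Diff}(M_0)$ acts on $H^2(M_0,\Z)$ by $(\phi^{-1})^*$ preserving the cup product form, and this action is trivial on $\mathrm{Diff}_0(M_0)$, so it descends to a homomorphism $\rho\colon\Gamma(M_0)\to O(H^2(M_0,\Z))$. I would then invoke (see \cite{HuyK3}): (i) $\rho$ is injective, i.e.\ a diffeomorphism of $M_0$ acting trivially on $H^2$ is isotopic to the identity --- this uses that K3 surfaces are K\"ahler together with the global Torelli theorem; and (ii) $\rho$ is surjective onto the subgroup of integral isometries realized by diffeomorphisms (in the convention of this paper denoted $O(H^2(M_0,\Z))$). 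Granting these, $\Gamma\cong O(H^2(M_0,\Z))$. Since $H^2(M_0,\Z)$ is a nondegenerate even unimodular lattice of signature $(3,19)$ inside $H^2(M_0,\R)$, its isometry group is the group of integral points of the reductive algebraic group $O(3,19)$, hence a lattice in $O(H^2(M_0,\R))$ by Borel--Harish-Chandra.

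For the second equality, recall that the mapping class group acts on $\mathrm{Teich}(M_0)$ by change of marking, $\gamma\cdot[(f,X)]=[(f\circ\gamma,X)]$, with $\mathrm{Mod}(M_0)=\mathrm{Teich}(M_0)/\Gamma$. Straight from the definition of the period map, $\mathrm{Per}([(f\circ\gamma,X)])=\gamma^*\,\mathrm{Per}([(f,X)])$ in $\mathbb{P}H^2(M_0,\C)$, so $\mathrm{Per}$ is $\Gamma$-equivariant and descends to $\overline{\mathrm{Per}}\colon\mathrm{Mod}(K3)\to\Gamma\backslash\mathrm{Per}(K3)$. I would then check $\overline{\mathrm{Per}}$ is a bijection: surjectivity is the surjectivity of the period map (every $\alpha$ with $\alpha\cdot\alpha=0$, $\alpha\cdot\bar\alpha>0$ is a period of some marked K3 surface, \cite{HuyK3}); for injectivity, if $[(f,X)]$ and $[(f',X')]$ have periods in one $\Gamma$-orbit, then replacing $f'$ by $f'\circ\gamma$ for suitable $\gamma$ we may assume $f^*H^{2,0}(X)=(f')^*H^{2,0}(X')$, whence $(f')\circ f^{-1}$ composed with the marking identifications yields a Hodge isometry $H^2(X,\Z)\to H^2(X',\Z)$, so by the global Torelli theorem $X$ and $X'$ are biholomorphic and represent the same point of $\mathrm{Mod}(K3)$. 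This gives $\mathrm{Mod}(K3)=\Gamma\backslash\mathrm{Per}(K3)$.

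The main obstacle lies in the first paragraph: the injectivity of $\rho$ --- the absence, up to isotopy, of diffeomorphisms acting trivially on cohomology --- is a genuinely deep input that does not follow formally from the Torelli theorem as stated, and pinning down the precise image of $\rho$ requires the orientation/spinor-norm bookkeeping this paper suppresses. A secondary subtlety is that $\mathrm{Per}$ is \emph{not} injective on $\mathrm{Teich}(M_0)$: its fibers contain non-separated points arising from flops of $(-2)$-curves, and one must verify --- as done above via global Torelli --- that exactly these coincidences, and no more, are absorbed by the $\Gamma$-quotient, so that the quotient is genuinely $\Gamma\backslash\mathrm{Per}(K3)$ with no further collapsing.
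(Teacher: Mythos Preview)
Your proposal is correct and follows the same route the paper takes: deduce the corollary from the global Torelli theorem via the period map and the action of diffeomorphisms on cohomology. The paper, however, offers only a one-sentence justification (``noting that mapping class group is generated by change of markings on the Teichm\"uller space, the Torelli theorem gives us the following description of the moduli space'') and treats the result as immediate; you have filled in the actual argument and, importantly, flagged the genuine subtleties the paper suppresses --- the injectivity of $\rho$ (diffeomorphisms acting trivially on $H^2$ are isotopic to the identity), the spinor-norm/orientation bookkeeping on the image of $\rho$, and the non-separated points in $\mathrm{Teich}(M_0)$ coming from $(-2)$-flops. These caveats are warranted: the statement $\Gamma=O(H^2(M_0,\Z))$ as written in the paper is a simplification of the precise result, and your discussion is the honest version of what is being asserted.
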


\begin{rmk} We have a bijection between Hodge structures and oriented positive 2-planes in the real cohomology space of $M_0$, given by $\Omega\in H^{2,0}(X,\C) \to span(\Re\Omega, \Im\Omega)\subset H^2(X,\R)$. Indeed, the condition $\Omega\cdot\Omega=0,\Omega\cdot\bar{\Omega}>0$ is equivalent to $\Re\Omega\cdot\Re\Omega=\Im\Omega\cdot\Im\Omega>0, \Re\Omega\cdot\Im\Omega=0$ which is equivalent to the two plane being positive and a complex scalar for $\Omega$ is simply rotation and real rescale of basis in the 2-plane. So we may identify the period domain with the Grassmannian of oriented positive two-planes in $H^2(M_0,\R)$. More precisely, this gives an isomorphism $\mathrm{Per}(K3)\simeq SO(3,19,\R)/(SO(2)\times SO(1,19))$. The complex moduli space $\Gamma\backslash \mathrm{Per}(K3)$ is highly non-Hausdorff, see \cite{Vererg} for a more detailed discussion on this.
\end{rmk}

%cdots

If we add the K\"ahler structure into consideration, the K\"ahler class and the Hodge structure together determine a positive 3-plane in the real cohomology space together with a direction in it specifying the K\"ahler class. So we have a map from the moduli space of K\"ahler structures to the space of positive 3-Grassmannian with a specified unit vector up to change of marking. We write this as $\mathrm{Kah}(K3)\to \Gamma\backslash SO(3,19,\R)/(SO(2)\times SO(19))$. 

However, this map is not surjective since a K\"ahler class cannot have zero intersection with curves. One way to solve this is to allow for degenerate K\"ahler forms, i.e. non-degenerate symplectic 2-forms that are in the positive cone but on the boundaries of the K\"ahler cone. 

\begin{defi}[Degenerate K\"ahler Form \cite{Kobdeg}] We define a degenerate K\"ahler form on a K3 surface $X$ to be a K\"ahler-Einstein orbifold metric on $X$ which is given by a K\"ahler-Einstein form on $Y$ where $X\to Y$ is a minimal resolution of $Y$ and $Y$ is a compact complex surface with at most rational double points. We call such $Y$ a \emph{generalized K3 surface}.

We define the period domain of degenerate K\"ahler forms of K3 to be $\mathrm{KPer}=\{(\alpha,\omega)\in \mathbb{P}H^2(M_0,\C)\times \mathbb{P}H^2(M_0,\R): \alpha\in \mathrm{Per}(K3), \alpha\cdot\omega=0,\omega\cdot\omega>0\}$ this is equivalent to pairs $\{(V,\omega)\}$ where $V$ is taken over positive 3-Grassmannians in $ H^2(M_0,\R)$ and $\omega$ are taken over vectors in $V$. So we have the identification
%????
$\mathrm{KPer}\simeq SO(3,19)/(SO(2)\times SO(19))$. And we take the period map as $(X,\omega)\mapsto (\mathrm{Per}(X),[\omega]) $ from the space of marked K3 surfaces with degenerate K\"ahler forms to the period domain.
\end{defi}

The following proposition shows that after adding in the degenerate K\"ahler forms, every positive 3-plane in the real cohomology space can indeed be realized by some K3 surface. 
\begin{prop}[Torelli Thoerem for Generalized K\"ahler Polarized K3 \cite{Kobdeg}] 
The period map $(X,\omega)\to (\mathrm{Per}(X),[\omega]) $ from marked K3 surfaces with degenerate K\"ahler forms to its period domain $\mathrm{KPer}$ is surjective. Moreover, if $F: H^2(X,\Z)\to H^2(X^\prime,\Z)$ is an isometry $($with respect to inner product given by intersection$)$ whose extension to $H^2(X,\C)\to H^2(X^\prime,\C)$ preserves the Hodge structure and the K\"ahler class, then there is a unique isomoprhism $f: X\to X^\prime$ such that $F=f^*$.
\end{prop}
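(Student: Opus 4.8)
The plan is to reduce the proposition to the classical strong Torelli theorem and to the classical surjectivity of the period map for honest K3 surfaces, by systematically translating between a generalized K3 surface $Y$ with rational double points and its minimal resolution $\pi\colon X\to Y$. The dictionary is that a degenerate K\"ahler form on $X$ is, by definition, the pullback of an honest orbifold K\"ahler--Einstein form on $Y$, so its class $\omega$ lies in $\overline{K_X}$, orthogonal precisely to the classes of the $\pi$-exceptional $(-2)$-curves. For surjectivity, take $(\alpha,\omega)\in\mathrm{KPer}$, i.e.\ a positive $3$-plane $V\subset H^2(M_0,\R)$ with a distinguished vector $\omega\in V$ and a positive $2$-plane $\alpha\subset V$ orthogonal to $\omega$. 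Classical surjectivity of $\mathrm{Per}$ provides a marked K3 surface $X$ with period $\alpha$; since the Weyl group $W_X$ generated by reflections in the $(-2)$-classes of $\mathrm{NS}(X)$ acts trivially on $\mathrm{Per}(X)$ and simply transitively on the chambers of the positive cone, after changing the marking by an element of $W_X$ we may assume $\omega\in\overline{K_X}$. If $\omega$ is interior it is an honest K\"ahler class and we are done (taking $Y=X$). Otherwise the effective $(-2)$-curves $C_1,\dots,C_k$ with $\omega\cdot C_i=0$ span a negative definite lattice that is a union of ADE root lattices; by Artin--Grauert contractibility there is a birational morphism $\pi\colon X\to Y$ contracting exactly the $C_i$ to rational double points, $\omega$ descends to an ample orbifold class on $Y$, and the orbifold Calabi--Yau theorem gives a Ricci-flat orbifold metric on $Y$ in that class, whose pullback is a degenerate K\"ahler form on $X$ realizing $(\alpha,\omega)$.

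For the Torelli part, let $(X,\omega_X)$ and $(X',\omega_{X'})$ be marked generalized K3 surfaces with degenerate K\"ahler forms and let $F\colon H^2(X,\Z)\to H^2(X',\Z)$ be a Hodge isometry with $F(\omega_X)=\omega_{X'}$. First I would show that $F$ maps $K_X$ onto $K_{X'}$: as $F$ permutes the $(-2)$-classes up to sign it sends $K_X$ to a chamber $w(K_{X'})$ of $X'$, and since both $\overline{K_{X'}}$ and $\overline{w(K_{X'})}$ contain $\omega_{X'}$, the hypothesis that $F$ preserves the K\"ahler class should pin down $w=\mathrm{id}$. Then $F$ carries effective $(-2)$-curve classes, hence the $\pi$-exceptional configuration of $X$, to that of $X'$. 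Choosing any honest K\"ahler class $\kappa\in K_X$, its image $F(\kappa)\in K_{X'}$ is honest K\"ahler, and the classical strong Torelli theorem applied to $F$ (which now takes the K\"ahler class $\kappa$ to the K\"ahler class $F(\kappa)$) produces the unique isomorphism $f\colon X'\to X$ with $f^*=F$. Because $f$ matches the exceptional curves it descends to an isomorphism $Y'\to Y$, and $f^*[\omega_X]=[\omega_{X'}]$ together with uniqueness of the Ricci-flat orbifold representative in a fixed class forces $f$ to pull the degenerate K\"ahler form of $X$ back to that of $X'$.

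The main obstacle, in both halves, is the boundary stratum where $\omega$ is orthogonal to $(-2)$-classes and the reduction to honest K3 theory is not formal. On the surjectivity side this requires the simultaneous contractibility of the ADE configuration cut out by $\omega^\perp$ and, more substantially, the orbifold version of Yau's theorem. On the uniqueness side the delicate point is precisely the claim that ``$F$ preserves the K\"ahler class'' forces the normalising Weyl element $w$ to be trivial, so that one genuinely obtains $f^*=F$ rather than $f^*=wF$: a priori $\omega_{X'}$ can lie on a wall shared by $K_{X'}$ with an adjacent chamber, and excluding this for the distinguished degenerate K\"ahler class of the moduli problem is what makes the statement work, since Hodge isometries preserving only the Hodge structure differ from isomorphisms exactly by such Weyl elements.
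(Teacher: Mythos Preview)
The paper does not prove this proposition at all: it is quoted as a known result with the citation \cite{Kobdeg} and used as input, so there is no ``paper's own proof'' to compare against. Your outline is in fact the standard route to this statement---reduce to classical surjectivity and strong Torelli for smooth K3 surfaces, then pass between a generalized K3 $Y$ and its minimal resolution $X$ via Artin--Grauert contraction and the orbifold Calabi--Yau theorem---and this is essentially what one finds in Kobayashi--Todorov.

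That said, the obstacle you flag in the uniqueness half is real and not fully resolved in your sketch. If $\omega$ lies on a wall, i.e.\ $\omega\cdot C=0$ for some effective $(-2)$-curve $C$, then the reflection $s_C$ is a Hodge isometry fixing $\omega$, yet it is \emph{not} induced by an automorphism of the smooth surface $X$. So the proposition, read literally as ``there exists $f\colon X\to X'$ with $f^*=F$'', fails for $F=s_C$. The correct formulation (and what Kobayashi actually proves) is either that $f$ is an isomorphism of the \emph{generalized} surfaces $Y\to Y'$, or equivalently that $F$ is required to preserve not just $[\omega]$ but the set of classes of the contracted curves (equivalently, the effective cone). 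Your argument that ``$\omega_{X'}\in\overline{K_{X'}}\cap\overline{w(K_{X'})}$ forces $w=\mathrm{id}$'' is exactly where this breaks: it is simply false when $\omega_{X'}$ is on a wall, and the repair is to pass to $Y$, where the Weyl group generated by the contracted curves has been quotiented out. Once that is done, your reduction to classical strong Torelli via an auxiliary interior class $\kappa\in K_X$ goes through.
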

%leaves
Adding the change of markings into consideration, we have the following.
\begin{cor} The moduli space of generalized K\"ahler forms of K3 surfaces is given by $$\mathrm{\overline{Kah(M_0)}}\simeq \Gamma\backslash \mathrm{KPer}$$$$\simeq \Gamma\backslash SO(3,19)/(SO(2)\times SO(19)).$$
\end{cor}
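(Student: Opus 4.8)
The plan is to deduce the corollary from the Torelli-type theorem for generalized K\"ahler polarized K3 surfaces stated just above (\cite{Kobdeg}), by exactly the argument that produced $\mathrm{Mod}(K3)=\Gamma\backslash\mathrm{Per}(K3)$ from the ordinary Torelli theorem; the only extra bookkeeping is to carry a degenerate K\"ahler class alongside the Hodge structure.

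First I would introduce an auxiliary Teichm\"uller space $\overline{\mathrm{Teich}}(M_0)$ of generalized K\"ahler polarized K3 surfaces: triples $(X,I,\omega)$ with $X$ diffeomorphic to $M_0$, $I$ a complex structure and $\omega$ a degenerate K\"ahler form for $I$, taken modulo $\mathrm{Diff}_0(M_0)$; equivalently, equivalence classes of marked data $(f,X,\omega)$ with $f\colon M_0\to X$ a diffeomorphism, two being identified when they differ by an element of $\mathrm{Diff}_0(M_0)$. The mapping class group $\Gamma=O(H^2(M_0,\Z))$ acts by change of marking, and one checks directly that quotienting the marked objects by all changes of marking recovers the unmarked objects, i.e. $\overline{\mathrm{Kah}}(M_0)\cong\overline{\mathrm{Teich}}(M_0)/\Gamma$; this is a routine unwinding of the defining equivalence relation $(X,I,\omega)\sim(Y,J,\nu)$.

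Next, the period map $\overline{\mathrm{Per}}\colon\overline{\mathrm{Teich}}(M_0)\to\mathrm{KPer}$, $[(f,X,\omega)]\mapsto(f^*H^{2,0}(X),[f^*\omega])$, is well defined and $\Gamma$-equivariant, so it descends to a map $\overline{\mathrm{Kah}}(M_0)\to\Gamma\backslash\mathrm{KPer}$. Surjectivity is immediate, being precisely the surjectivity clause of the generalized Torelli theorem. For injectivity I would argue: if two marked triples $(f,X,\omega_X)$ and $(f',X',\omega_{X'})$ have periods lying in the same $\Gamma$-orbit, then after moving one of them within its $\Gamma$-orbit we may assume the periods coincide, so the isometry $F\colon H^2(X,\Z)\to H^2(X',\Z)$ obtained by composing the two marking identifications carries the Hodge structure and the K\"ahler class of $X$ to those of $X'$. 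The injectivity clause of the generalized Torelli theorem then provides an isomorphism $\phi\colon X\to X'$ realizing $F$, and one checks that $\phi\circ f$ and $f'$ induce the same isometry of $H^2(M_0,\Z)$, hence differ by an element of $\mathrm{Diff}_0(M_0)$ — this is exactly the fact, already built into the identification $\Gamma=O(H^2(M_0,\Z))$, that a diffeomorphism acting trivially on $H^2$ is isotopic to the identity — so the two triples define the same point of $\overline{\mathrm{Kah}}(M_0)$. Thus $\overline{\mathrm{Kah}}(M_0)\to\Gamma\backslash\mathrm{KPer}$ is a bijection. The remaining identification $\mathrm{KPer}\simeq SO(3,19)/(SO(2)\times SO(19))$ is the homogeneous-space description already recorded when $\mathrm{KPer}$ was defined: $SO(3,19)$ acts transitively on pairs consisting of an oriented positive $3$-plane $V\subset H^2(M_0,\R)$ together with a ray in $V$ — transitive on maximal positive definite subspaces, with the residual $SO(3)$ transitive on rays — and the stabilizer of such a pair is $SO(2)\times SO(19)$.

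I do not expect a genuine obstacle: the generalized Torelli theorem does all the real work, and what is left is diagram-chasing with the marking identifications plus the (already invoked) triviality of the kernel of $\Gamma\to O(H^2(M_0,\Z))$. The one place that asks for care is the orientation bookkeeping in the last homogeneous-space identification — oriented versus unoriented $3$-planes, rays versus lines — which is what pins the compact isotropy group down to $SO(2)\times SO(19)$ rather than something larger, but this is routine rather than a real difficulty.
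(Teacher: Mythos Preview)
Your proposal is correct and follows essentially the same approach as the paper: the paper does not give a detailed proof but simply introduces the corollary with ``Adding the change of markings into consideration, we have the following,'' treating it as immediate from the generalized Torelli theorem. You have faithfully unpacked exactly that sentence --- defining the marked/Teichm\"uller version, quotienting by $\Gamma$, and reading off surjectivity and injectivity from the two clauses of the generalized Torelli theorem --- so your argument is a correct expansion of what the paper leaves implicit.
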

This gives a homogeneous description of the moduli space of generalized K\"ahler froms of K3 surfaces. Note that this space is Hausdorff while the complex moduli space of K3 surfaces is not. Elements in the generalized period domain $\mathrm{KPer}$ leaves the compact sets exactly when the positive $3$-plane approaches the light cone in the real cohomology space. 

\subsection{Kummer Surfaces}
Kummer surfaces is a special class of K3 surfaces with a very concrete construction. 

\begin{defi}[Kummer surface]
Given a complex torus $A=\C^2/\Lambda(g)$ where $\Lambda(g)$ is the lattice in $\C^2$ generated by the row vectors of $g\in \mathrm{GL}(4,\R)$, we may consider the involution $\tau:(z,w)\to (-z,-w)$ on $\C^n$ which has $16$ fixed points in $A$. The Kummer minifold $\mathrm{Kum}(g)$ is defined as the minimal resolution of $A/\tau$ where $A=\C^2/\Lambda(g)$ is a complex torus given by $g\in \mathrm{GL}(4,\R)$. The minimal resolution is given by blowing up the $16$ fixed points of the involution $\tau$. 

We call $A/\tau$ the exceptional Kummer surface and we call the blow-up of the $16$ fixed points the exceptional curves of the Kummer surface $\mathrm{Kum}(g)$. We denote the exceptional curves by $C_i,1\le i\le 16$.
 \end{defi}
%???
Indeed, we may check that the image of the holomorphic 2-form $dz\wedge dw$ extends to a non-vanishing holomorphic 2-form on the Kummer surface and that the Kummer surface is simply connected. Therefore it is a K3 surface.

Next we fix a marking on Kummer surfaces for simplicity for calculation.

For any $g\in \mathrm{SL}(4,\R)$, we identify its row vectors with complex vectors in $\C^2$ to obtain a lattice $\Lambda(g)$, and take a marked Kummer surface $(\phi_g, \mathrm{Kum}(g))$ in the following way. 
For the identity element $I\in \mathrm{SL}(4,\R)$ we fix a marking $\phi_I : \mathrm{Kum}(I)\to M_0$, and for any $g\in \mathrm{SL}(4,\R)$, we take $\phi_g=g_*\phi_I$ where we consider $g$ acting on the left on $\C^2$ as a homeomorphism $\C^2/\Lambda(I)\to \C^2/\Lambda(g)$ and $g_*$ is the corresponding homeomorphism between the Kummer surfaces. We denote by $C_i$ the image of the 16 exceptional curves of $\mathrm{Kum}(I)$ in $M_0$ under $\phi_I$, then by the way we defined $\phi_g$ they are also the image of the 16 exceptional curves of $\mathrm{Kum}(g)$ under $\phi_g$. We denote by $\omega_g$ the degenerate K\"ahler form on the Kummer surface $\mathrm{Kum}(g)$ given by the standard K\"ahler form $\omega_0$ on $\C^2$. Then under this special marking  $(\phi_g)^*(\omega_g)$ is constant on $M_0$. 

We define the Kummer lattice $K$ to be the smallest primitive sublattice of $H^2(X,\Z)$ that contains $C_i,1\le i\le 16$.

The map from the Teichm\"uller space of complex-2-tori to $\mathrm{Per}(M_0)$ given by $\C^2/g \to \phi_g^*(H^{2,0}(\mathrm{Kum}(g))$ has image $\mathrm{P(Kum)}=\{\alpha\in \mathrm{Per}(M_0): \alpha\cdot C_i=0, 1\le i\le 16\}$ by the following proposition and the Torelli thoerem. 

\begin{prop}[\cite{Nikkum}, see also Proposition 14.3.17 of \cite{HuyK3}  ]\label{kumcri} A K3 surface is Kummer if and only if there exists a primitive embedding of the Kummer lattice $K$ into $NS(X,\Z)=\{c\in H^2(X,\Z):c\cdot\Omega=0\}$. 
\end{prop}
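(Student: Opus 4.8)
The plan is to establish both directions. One direction is essentially the construction: if $X = \mathrm{Kum}(g)$ is the minimal resolution of $A/\tau$ for some complex torus $A = \C^2/\Lambda(g)$, then the $16$ exceptional curves $C_i$ are $(-2)$-curves on $X$, each algebraic, so they lie in $NS(X,\Z)$; I would invoke the classical computation (as in Chapter 14 of \cite{HuyK3}) that the primitive closure of $\bigoplus_i \Z C_i$ inside $H^2(X,\Z)$ — the Kummer lattice $K$ — is an even negative definite lattice of rank $16$ and discriminant $2^6$, and that it embeds primitively into $NS(X,\Z)$ precisely because all $C_i$ are classes of effective divisors. The key structural fact here is that $K$ contains, besides the $C_i$ themselves, the half-sums $\frac12\sum_{i \in S} C_i$ over certain affine subsets $S \subset (\Z/2)^4$ of size $8$; these come from the pullback of line bundles on the torus, and their integrality is what forces $K$ to be a proper overlattice of $\bigoplus \Z C_i$.

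For the converse — the substantive direction — I would start with a K3 surface $X$ together with a primitive embedding $K \hookrightarrow NS(X,\Z)$. The goal is to reconstruct a torus $A$ and an isomorphism $X \cong \mathrm{Kum}(A)$. First I would use the embedding to exhibit the $16$ disjoint $(-2)$-classes $C_i$ as effective: since each $C_i^2 = -2$, Riemann–Roch gives that $\pm C_i$ is effective, and the disjointness together with positivity of the Kähler cone pins down the sign, so each $C_i$ is represented by a smooth rational curve (or a sum of such), and after possibly acting by the Weyl group one arranges the $C_i$ to be $16$ disjoint smooth $(-2)$-curves. Next, the presence of the half-classes $\frac12\sum_{i\in S} C_i$ in $NS(X,\Z)$ yields a double cover $\tilde X \to X$ branched over $\bigcup C_i$, by interpreting such a half-class as a $2$-torsion element in $\mathrm{Pic}$ of the complement; blowing down the $16$ exceptional curves on $X$ and resolving the cover produces a smooth surface $A$ with trivial canonical bundle, $b_1 = 4$, and hence — by the Enriques–Kodaira classification — a complex torus. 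One then checks that $X$ is the minimal resolution of $A/\pm 1$, i.e.\ $X \cong \mathrm{Kum}(A)$. Finally, by the Torelli theorem for K3 surfaces quoted above, the existence of such an abstract Hodge-isometric configuration suffices to conclude $X$ is biholomorphic to a Kummer surface.

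The main obstacle is the converse, and within it the lattice-theoretic bookkeeping around the half-classes: one must show that a primitive embedding of the abstractly-defined Kummer lattice $K$ into $NS(X,\Z)$ genuinely produces the branched double cover with the right numerical invariants, and that the resulting torus quotient reconstructs $X$ on the nose rather than merely up to deformation. This is exactly where I would lean on the cited work of Nikulin \cite{Nikkum} and the exposition in \cite{HuyK3}; I would not reprove the classification-of-surfaces input or the full structure theory of $K$, but would cite them and concentrate the argument on assembling the pieces — effectivity of the $C_i$, construction of the cover from the half-classes, identification of $A$ as a torus, and the final appeal to Torelli.
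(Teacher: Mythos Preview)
The paper does not give its own proof of this proposition: it is stated purely as a cited result from \cite{Nikkum} and \cite{HuyK3}, with no argument supplied. Your proposal therefore goes well beyond what the paper does, and what you outline is essentially the classical Nikulin argument as presented in Chapter~14 of \cite{HuyK3}: the forward direction via the explicit exceptional configuration and the overlattice generated by the half-sums, and the converse via effectivity of the $(-2)$-classes, the double cover coming from the divisibility by $2$, and identification of the cover as a torus by classification of surfaces.

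Two small remarks on the sketch itself. First, the final appeal to Torelli is superfluous: once you have actually built the torus $A$ and identified $X$ with the minimal resolution of $A/\{\pm1\}$, you are done; Torelli is not needed to upgrade this to a biholomorphism. Second, in the cover construction the order of operations is usually: take the double cover of $X$ branched along $\sum C_i$ (this exists precisely because $\frac12\sum C_i$ is integral), observe that the preimages of the $C_i$ are $(-1)$-curves, and blow those down to obtain $A$; your phrasing ``blowing down the $16$ exceptional curves on $X$ and resolving the cover'' inverts this slightly and would need care, since contracting the $C_i$ on $X$ first produces rational double points rather than smooth points. Neither point is a genuine gap, and your plan is a faithful outline of the standard proof the paper is content to cite.
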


So we see that Kummer surfaces with a certain marking make up a 5 dimensional closed subspace of the period domain. We call it the specially marked Kummer locus and denote it by $\mathrm{P(Kum)}$. However, considering the different markings, or equivalently the action of the mapping class group, the orbit of this closed subspace is in fact dense in the period domain.
\begin{prop}[Density theorem] Kummer surfaces are dense in the period domain.
\end{prop}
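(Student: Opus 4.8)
This is, in essence, the classical density theorem for Kummer surfaces, so one option is simply to invoke it (see \cite{HuyK3}); I will instead indicate the argument, which reduces the assertion to lattice theory together with the density of periods of maximal Picard rank. Throughout I identify $\mathrm{Per}(M_0)$ with the Grassmannian of oriented positive two-planes in $H^2(M_0,\R)$.

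The first step is to translate the statement. For $\gamma\in\Gamma$ the translate $\gamma\cdot\mathrm{P(Kum)}$ equals $\{\alpha:\alpha\perp\gamma(C_i),\ 1\le i\le 16\}=\{\alpha:\alpha\perp\gamma(K)\}$, so a period $\alpha$ lies in $\Gamma\cdot\mathrm{P(Kum)}$ exactly when $\alpha^\perp\cap H^2(M_0,\Z)$ contains a primitively embedded copy of $K$; by Proposition \ref{kumcri} this is precisely the condition that $\alpha$ be the period of a Kummer surface for some marking. Hence the Kummer locus is $\bigcup_\iota\mathcal{D}_\iota$, where $\iota$ runs over primitive lattice embeddings $K\hookrightarrow H^2(M_0,\Z)$ and $\mathcal{D}_\iota=\{\alpha:\alpha\perp\iota(K)\}=\mathbb{P}(N_\iota\otimes\C)\cap\mathrm{Per}(M_0)$ is the period subdomain attached to the orthogonal complement $N_\iota=\iota(K)^\perp$, a primitive sublattice of signature $(3,3)$. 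This is a countable union of closed subvarieties, and the task is to prove it dense.

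The second step uses two standard facts. First, the $\Q$-rational positive two-planes are dense in $\mathrm{Per}(M_0)$: a positive $2$-frame can be approximated by a rational one, and positivity is an open condition; each such plane is the period of a singular (Picard rank $20$) K3 surface. Second, for any $\Q$-rational positive two-plane $V\subseteq N_\iota\otimes\R$ one has $V\perp\iota(K)$ automatically, so $V^\perp\cap H^2(M_0,\Z)$ contains the primitive copy $\iota(K)$ and $V$ is a Kummer period; conversely a $\Q$-rational plane is a Kummer period iff it is contained in $N_\iota\otimes\R$ for some $\iota$. It therefore suffices to show that the $\Q$-rational planes lying in $\bigcup_\iota N_\iota\otimes\R$ — equivalently, the $\Gamma$-orbit of the $\Q$-rational planes contained in one fixed $N_\iota\otimes\R$ — are dense in $\mathrm{Per}(M_0)$.

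This last reduction is the heart of the matter, and where I expect the real work to lie: the $\Q$-rational planes inside a fixed $N_\iota\otimes\R$ are dense only in the proper subdomain $\mathcal{D}_\iota$, so one must show that their $\Gamma$-translates spread out over the whole period domain. Concretely, given $\alpha_0$ and $\varepsilon>0$ one picks a $\Q$-rational plane $W$ within $\varepsilon/2$ of $\alpha_0$, and if $W^\perp\cap H^2(M_0,\Z)$ does not already contain a primitive copy of $K$ one perturbs $W$ among $\Q$-rational planes until it does. I would carry this out either by a Dirichlet-type simultaneous approximation in the Grassmannian — adjusting a spanning vector of the transcendental lattice of $W$ by a nearby integral direction so as to force $W^\perp\cap H^2(M_0,\Z)$ to contain a primitive copy of $K$ — or, more structurally, by a homogeneous-dynamics argument: Ratner's orbit-closure theorem applied to a unipotent one-parameter subgroup contained in the noncompact factor of the stabilizer $K'\subset SO(H^2(M_0,\R))$, combined with the fact that $SO(N_\iota\otimes\R)\cong SO(3,3)$ lies in no proper rational parabolic subgroup of $SO(H^2(M_0,\R))$, shows that the closure of the relevant orbit is the whole space. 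Together with the density of $\Q$-rational planes from the previous step, either route yields the density of the Kummer locus in $\mathrm{Per}(M_0)$.
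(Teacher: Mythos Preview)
Your reduction in the first two steps is correct and matches the paper's framing: the Kummer locus is the union over primitive embeddings $\iota:K\hookrightarrow H^2(M_0,\Z)$ of the subdomains $\mathcal{D}_\iota$, and density comes down to approximating an arbitrary period by one whose orthogonal complement contains a primitive copy of $K$. Where you and the paper diverge is precisely in your Step~3, which you yourself flag as the heart of the matter but do not actually carry out. Your Dirichlet-type perturbation is left as a slogan, and your Ratner argument, while plausible in outline, would require nontrivial setup (identifying the right unipotent, checking the rationality hypothesis on the intermediate subgroup) that you have not supplied. As written this step is a gap.

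The paper's sketch closes exactly this gap by switching from the N\'eron--Severi side to the transcendental side. Rather than trying to locate a rank-$16$ copy of $K$ inside the rank-$20$ lattice $W^\perp\cap H^2(M_0,\Z)$, it uses the equivalent criterion (also due to Nikulin, and implicit in Proposition~\ref{kumcri}) that a singular K3 surface is Kummer if its rank-$2$ transcendental lattice $T(X)=\mathrm{span}(\Re\Omega,\Im\Omega)\cap H^2(M_0,\Z)$ is positive definite, even, and satisfies $x\cdot x\in 4\Z$ for all $x$. This is a congruence condition on a rank-$2$ sublattice, not an embedding problem for a rank-$16$ one, and rational positive $2$-planes meeting this divisibility condition are dense in the Grassmannian by an elementary argument---no homogeneous dynamics required. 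So the efficient fix to your proof is to replace ``$W^\perp$ contains a primitive copy of $K$'' by the equivalent condition on $W\cap H^2(M_0,\Z)$, after which your Step~3 becomes a one-line density statement about integral vectors with self-intersection in $4\Z$.
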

\begin{proof}[Sketch of proof, See \cite{HuyK3}, \cite{Kobdeg} for more detail] By Proposition \ref{kumcri} a K3 surface of maximal Picard rank is a Kummer surface if $T(X)=NS(X,\R)^\perp$ is a positive definite even oriented rank 2 lattice in $H^2(X,\Z)$. This happens if the 2-plane given by the period $[\Omega_X]$ of $X$ is a rationally defined 2-plane in $H^2(X,\R)$ such that $x\cdot x\in 4\Z$ if $x\in span(\Re \Omega,\Im \Omega)\cap H^2(X,\Z)$. Such 2-planes are dense in the Grassmannian of positive 2-planes in $H^2(X,\R)$.
\end{proof}

If we add the K\"ahler from into consideration, the flat metrics on tori maps to degenerate K\"ahler forms on the K3 surface, in fact, they give exactly the degenerate K\"ahler forms that have zero volume on the 16 exceptional curves.

\begin{prop} The image of $(\phi_g,\mathrm{Kum}(g),\phi_g^*(\omega_0))$ in $\mathrm{KPer}$ is $$\mathrm{K(Kum)}=\{(\alpha,\omega)\in \mathrm{KPer}: \alpha\cdot C_i=0, \omega\cdot C_i=0, 1\le i\le 16\}\subset \mathrm{KPer}.$$ We call this the degenerate flat Kummer locus. In particular, the image of the K\"ahler moduli space of complex torus into generalized K\"ahler moduli spcae of K3 surfaces is $\Gamma\backslash\Gamma(\mathrm{K(Kum)})$.
\end{prop}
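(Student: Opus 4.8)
The plan is to prove the two inclusions ``image $\subseteq \mathrm{K(Kum)}$'' and ``$\mathrm{K(Kum)}\subseteq$ image'' and then read off the quotient statement. Two facts will be used throughout. First, since $\omega_0$ on $\C^2$ is invariant under $\tau$, it descends to a flat, hence Ricci-flat, hence K\"ahler--Einstein orbifold metric on the exceptional Kummer surface $A/\tau$ (which has sixteen $A_1$ rational double points), and its pullback under the minimal resolution $\pi\colon\mathrm{Kum}(g)\to A/\tau$ is precisely the degenerate K\"ahler form $\omega_g$; this is the assertion, recalled just above, that flat metrics on tori map to degenerate K\"ahler forms on $\mathrm{Kum}(g)$. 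Second, by the construction of the markings $\phi_g=g_*\phi_I$ the classes $C_1,\dots,C_{16}\in H^2(M_0,\Z)$ are simultaneously the images under $\phi_g$ of the sixteen exceptional curves of $\mathrm{Kum}(g)$ for every $g$, and $\phi_g^*$ is an isometry of the intersection form.

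For the inclusion into $\mathrm{K(Kum)}$ I would argue as follows. Each exceptional curve of $\mathrm{Kum}(g)$ is an algebraic $(-2)$-curve, so its class lies in $NS(\mathrm{Kum}(g),\Z)=\{c\in H^2(\mathrm{Kum}(g),\Z):c\cdot\Omega=0\}$; hence $\Omega_{\mathrm{Kum}(g)}\cdot C_i=0$, and applying the isometry $\phi_g^*$ gives $\alpha\cdot C_i=0$ for $\alpha=\phi_g^*(H^{2,0}(\mathrm{Kum}(g)))$. For the K\"ahler part, since $\pi$ collapses each exceptional curve to a point and $\omega_g=\pi^*\bar\omega$ for the descended orbifold form $\bar\omega$, the restriction of $\omega_g$ to each exceptional curve vanishes pointwise, so its period over that curve is zero; by the isometry $\phi_g^*$ this gives $\omega\cdot C_i=0$ for $\omega=[\phi_g^*\omega_g]$. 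Thus every point in the image satisfies the defining equations of $\mathrm{K(Kum)}$.

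For the reverse inclusion, take $(\alpha,\omega)\in\mathrm{K(Kum)}$. Since $\alpha\cdot C_i=0$ for all $i$ we have $\alpha\in\mathrm{P(Kum)}$, so by the identification of the specially marked Kummer locus recalled above (which rests on Nikulin's criterion, Proposition \ref{kumcri}, and the Torelli theorem) there is $g_0\in\mathrm{SL}(4,\R)$ with $\phi_{g_0}^*(H^{2,0}(\mathrm{Kum}(g_0)))=\alpha$. It remains to adjust the K\"ahler class. Replacing $g_0$ by $g_0 p$ with $p\in\mathrm{GL}(2,\C)$ acts $\C$-linearly, hence biholomorphically, on the universal cover $\C^2$, so it leaves the period $\alpha$ unchanged while replacing $\omega_0$ by the translation-invariant form $p^*\omega_0$ on the torus, whose cohomology class on $\mathrm{Kum}(g_0)$ moves. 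As $p$ (equivalently, the underlying positive Hermitian form) varies, these classes sweep out the full positive cone of $\{c\in H^{1,1}(\mathrm{Kum}(g_0),\R):c\cdot C_i=0\ \forall i\}$, a $4$-dimensional real space, since a translation-invariant K\"ahler form on a complex torus is recovered from its cohomology class by averaging. As $(\alpha,\omega)\in\mathrm{KPer}$ forces $\omega$ into exactly this cone ($\omega\cdot\alpha=0$, $\omega\cdot C_i=0$, $\omega\cdot\omega>0$), some $p$ gives $[\phi_{g_0p}^*(\omega_{g_0p})]=\omega$. Normalizing $g:=g_0p$ into $\mathrm{SL}(4,\R)$, the triple $(\phi_g,\mathrm{Kum}(g),\omega_g)$ has period $(\alpha,\omega)$, so $\mathrm{K(Kum)}$ lies in the image, and together with the previous paragraph the image equals $\mathrm{K(Kum)}$.

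For the ``in particular'' claim, the assignment $g\mapsto(\phi_g^*(H^{2,0}(\mathrm{Kum}(g))),[\phi_g^*\omega_g])$ is invariant under the left action of $\mathrm{SL}(4,\Z)$ -- because $\Lambda(\gamma g)=\Lambda(g)$, so $\mathrm{Kum}(\gamma g)$ and $\mathrm{Kum}(g)$ are literally the same polarized surface and the two markings differ by an element of $\Gamma$ -- and under the right action of $\mathrm{U}(2)$ -- because a unitary change of basis of $\C^2$ is an isometry of $(\C^2,\omega_0)$ and induces an isomorphism of the corresponding Kummer surfaces respecting $\omega_g$. Hence it descends to $\mathrm{Kah}(\mathbb{T}^4)=\mathrm{SL}(4,\Z)\backslash\mathrm{SL}(4,\R)/\mathrm{U}(2)$ once composed with $\mathrm{KPer}\to\Gamma\backslash\mathrm{KPer}=\overline{\mathrm{Kah}}(M_0)$; since by the first part the image of $\mathrm{SL}(4,\R)$ in $\mathrm{KPer}$ is $\mathrm{K(Kum)}$, the image of $\mathrm{Kah}(\mathbb{T}^4)$ in $\overline{\mathrm{Kah}}(M_0)$ is $\Gamma\backslash\Gamma(\mathrm{K(Kum)})$. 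The step I expect to require the most care is the reverse inclusion: checking that the flat K\"ahler metrics on a complex torus fill out exactly the cone appearing in the definition of $\mathrm{K(Kum)}$ (no chamber missed, nothing extraneous included), and that the $(-2)$-classes in play are realized by honest exceptional curves of a Kummer contraction -- the latter being subsumed in the already-cited surjectivity onto $\mathrm{P(Kum)}$, which in turn rests on Nikulin's criterion, the Torelli theorem, and the orbifold Calabi--Yau theorem together with the flatness of Ricci-flat K\"ahler metrics on tori. The marking bookkeeping in the last paragraph is comparatively routine.
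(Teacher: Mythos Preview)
Your proof is correct and follows essentially the same strategy as the paper's: show the image lies in $\mathrm{K(Kum)}$ using that the exceptional curves are algebraic and receive zero volume, then fix a period $\alpha\in\mathrm{P(Kum)}$ and sweep out the fiber over it by acting on the right with $\mathrm{GL}(2,\C)$. The only notable difference is in how surjectivity onto the fiber is justified: the paper observes that the induced map $\mathrm{GL}(2,\C)/\mathrm{U}(2)\to\mathrm{Kah}(\Omega)$ is smooth and locally injective between $4$-dimensional spaces and concludes the image is everything, whereas you argue more directly that translation-invariant K\"ahler forms on a fixed complex torus are linearly parametrized by positive Hermitian matrices (via averaging), hence surject onto the positive cone in the relevant $4$-space. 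Your version makes the surjectivity step slightly more transparent; you also spell out the descent argument for the ``in particular'' clause, which the paper leaves implicit.
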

\begin{proof} 
Fix an $\Omega\in \mathrm{P(Kum)}$ and the complex K3 surface $X=\mathrm{Kum}(g)$ with period $\Omega$, we only need to prove that every K\"ahler class in the K\"ahler cone that is orthogonal to $C_i$ can be realized as the image of the standard K\"ahler form $\omega_0$ on $\C^2$ for some $h\in g \mathrm{GL}(n,\C)$. Firstly, the image $(\Omega,\phi_g^*(\omega_0))$ is in $\mathrm{K(Kum)}$ since the degenerate K\"ahler form has zero volume on the 16 exceptional curves. And we note that the fiber of $\mathrm{K(Kum)}$ over $\Omega$, denoted $\mathrm{Kah}(\Omega)$ is one component (determined by orientation) of $\{\omega\in H^2(X,\R): \omega\cdot C_i=\omega\cdot \Omega=0, \omega\cdot\omega>0\}$ which is a connected open subset of a $4$ dimensional subspace in $H^2(X,\R)$.
The right multiplication of $\mathrm{GL}(n,\C)$ on $\mathrm{GL}(2n,\R)$ preserves the complex structure. Hence we fix the marking $\phi_g$ to descend this action to $\mathrm{Kah}(\Omega)$. The stablizer of $\omega_g$ is a conjugate of $\mathrm{U}(n)$. So we have a differentiable map $\mathrm{GL}(n,\C)/\mathrm{U}(n)\to \mathrm{Kah}(\Omega): h\to \omega_{gh}$ which is locally injective. Thus the image must be the whole set. 
\end{proof}
%?????? %KKum K\"ah

\section{Special Lagrangian Foliation and Geodesic Flow for Kummer Surfaces}\label{slag}

We define the linear foliations and the geodesic flow for Kummer surfaces to be the image of the linear foliations and the geodesic flow on the tori generating the Kummer surfaces. 

The horizontal and vertical foliations of a Kummer surface $X=\mathrm{Kum}(g)$ are defined for a specified $g\in \mathrm{GL}(4,\R)$ as the image of the horizontal and vertical foliation of the torus $X=\C^2/\Lambda(g)$ under the map $X\to X/\{\pm 1\}$. These are singular special Lagrangian foliations on the corresponding exceptional Kummer surfaces and they are defined outside the exceptional curves on the Kummer surfaces.

\begin{prop}\label{lagfol}
Suppose $(X,\omega)\in \mathrm{K(Kum)}$ is a Kummer surface given by $X=\mathrm{Kum}(g)$ with degenerate K\"ahler form $\omega$, then
the horizontal foliations of $\mathrm{Kum}(gh),\omega,h\in U(2)$ are in one to one correspondence to linear special Lagrangian foliations of $(X,\omega)$ outside the 16 exceptional curves. 
\end{prop}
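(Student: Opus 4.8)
The plan is to reduce the proposition to two ingredients: (i) for \emph{linear} foliations the special Lagrangian condition is no stronger than the Lagrangian condition, and (ii) the classical fact that $\mathrm{U}(2)$ acts transitively on the Lagrangian Grassmannian $\mathrm{U}(2)/\mathrm{O}(2)$ of $(\C^2,\omega_0)$. First I would set up flat coordinates: fix the representative $X=\mathrm{Kum}(g)$ and the flat structure on $X\setminus\bigcup_{j=1}^{16}C_j$ lifted from $\C^2/\Lambda(g)$, so that in these coordinates $\omega=\omega_0$ and $\Omega=dz_1\wedge dz_2$. For $h\in\mathrm{U}(2)$ the lattices $\Lambda(gh)$ and $\Lambda(g)$ differ by a $\C$-linear isometry $\psi_h$ of $(\C^2,\omega_0)$ depending only on $h$, with $\psi_h(\Lambda(gh))=\Lambda(g)$, so $\psi_h$ descends to an isometric biholomorphism $\Psi_h\colon\mathrm{Kum}(gh)\to X$ that is affine for the flat structures; in particular the flat structure, and hence the notion of ``linear foliation'', is intrinsic to $(X,\omega)$. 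Under $\Psi_h$ the horizontal foliation of $\mathrm{Kum}(gh)$ becomes the linear foliation of $X$ whose leaves are the images of the affine translates of the $2$-plane $V_h:=\psi_h(\{y_1=y_2=0\})\subset\C^2$, and $V_h$ is Lagrangian for $\omega_0$ since $\psi_h\in\mathrm{U}(2)\subset\mathrm{Sp}(4,\R)$. Conversely, a linear foliation of $X\setminus\bigcup C_j$ is by definition the descent of a linear foliation of $\C^2/\Lambda(g)$, hence the family of affine translates of one $2$-plane $V\subset\C^2$ (the involution $\tau$ preserves any linear subspace), and distinct planes yield distinct foliations because $\C^2/\Lambda(g)\to X$ is a local diffeomorphism away from the fixed points. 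Thus both sides of the asserted correspondence are parametrized by $2$-planes in $\C^2$.

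Next I would prove ingredient (i). The foliation attached to $V$ is Lagrangian iff $\omega_0|_V=0$, and this already forces it to be special Lagrangian. Indeed, if $\omega_0|_V=0$ then $\Omega|_V=dz_1\wedge dz_2|_V\neq0$: otherwise any two vectors of $V$ would be $\C$-linearly dependent, so $V$ would be a complex line; but any complex line has $\omega_0|_V\neq0$ (the Kähler form restricts to a nonzero area form), contradicting $\omega_0|_V=0$. Writing $\Omega|_V=\lambda\,\mathrm{vol}_V$ with $\lambda\in\C^{\ast}$, and recalling that the holomorphic $2$-form is defined only up to a nonzero scalar, I replace $\Omega$ by $\lambda^{-1}\Omega$ --- a single global rescaling, legitimate for all leaves at once because $\Omega$ and the foliation are translation invariant --- and orient $V$ suitably; then $\Im\Omega|_F=0$ and $\Re\Omega|_F>0$ on every leaf $F$. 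Hence the linear special Lagrangian foliations of $(X,\omega)$ are exactly the foliations by translates of Lagrangian $2$-planes of $(\C^2,\omega_0)$.

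Finally I would invoke ingredient (ii): $\mathrm{U}(2)$ acts transitively on the Lagrangian Grassmannian, with stabilizer $\mathrm{O}(2)$ at the standard plane $\{y_1=y_2=0\}$ --- given a Lagrangian $V$, a real-orthonormal basis $v_1,v_2$ of $V$ has Hermitian product $\langle v_1,v_2\rangle=0$, since its real part (the flat inner product) vanishes by orthonormality and its imaginary part $\omega_0(v_1,v_2)$ vanishes by Lagrangianness, so $v_1,v_2$ is a unitary basis and the unitary sending the standard basis to it sends $\{y_1=y_2=0\}$ to $V$. Therefore $\{V_h:h\in\mathrm{U}(2)\}$ is exactly the set of Lagrangian $2$-planes, and $V_h=V_{h'}$ precisely when $\psi_{h'}^{-1}\psi_h$ preserves $\{y_1=y_2=0\}$, i.e.\ when $h,h'$ lie in a common left coset of $\mathrm{O}(2)$ in $\mathrm{U}(2)$. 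Combining the three steps, $h\mapsto\Psi_h\bigl(\text{horizontal foliation of }\mathrm{Kum}(gh)\bigr)$ descends to a bijection from $\mathrm{U}(2)/\mathrm{O}(2)$ onto the set of linear special Lagrangian foliations of $(X,\omega)$ outside the exceptional curves, which is the claimed one-to-one correspondence.

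I expect the one genuinely non-routine point to be ingredient (i): a priori the special Lagrangian condition looks strictly stronger than the Lagrangian condition --- insisting on a \emph{fixed} $\Omega$ of phase $0$ would cut $\mathrm{U}(2)$ down to $\mathrm{SU}(2)$ --- and the resolution is exactly that $\Omega$ is defined only up to scale and that its restriction to any single $2$-plane is automatically of constant phase, so no leaf can obstruct the phase normalization. Everything else is bookkeeping: identifying the flat coordinates under $g\mapsto gh$, transporting the foliation through $\Psi_h$, and descending through the Kummer involution and the blow-up.
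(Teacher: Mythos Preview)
Your approach differs from the paper's: the paper argues by direct coordinate computation, parametrizing linear $2$-planes as $V=\{y_1=ax_1+bx_2,\ y_2=cx_1+dx_2\}$, checking that $\omega_0|_V=0$ and $\Im(dz_1\wedge dz_2)|_V=0$ hold exactly when $b=c$ and $a=-d$, and then matching these against the images of the horizontal plane under $h\in\mathrm{U}(2)$ by writing the image plane's matrix as $\Im h\,(\Re h)^{-1}$. You instead invoke the homogeneous structure of the Lagrangian Grassmannian and the transitivity of $\mathrm{U}(2)$, which is cleaner and makes the $\mathrm{U}(2)/\mathrm{O}(2)$ picture explicit.

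The substantive issue is your ingredient (i), and you are right to flag it as the crux. As written it is a genuine gap if one reads the paper's Definition~3.2 literally, since there $\Omega$ is a fixed form, not just the line $H^{2,0}$. With $\Omega=dz_1\wedge dz_2$ fixed, Lagrangian and special Lagrangian on a linear plane are \emph{not} equivalent: in the paper's coordinates the former is $b=c$ while the latter adds $a=-d$, so the special Lagrangian planes form $\mathrm{SU}(2)/\mathrm{SO}(2)\cong S^2$, strictly smaller than the Lagrangian Grassmannian $\mathrm{U}(2)/\mathrm{O}(2)$. Concretely, for $h=\mathrm{diag}(e^{i\theta},1)\in\mathrm{U}(2)$ the image of $\{y_1=y_2=0\}$ is $\{y_1=(\tan\theta)x_1,\ y_2=0\}$, which is Lagrangian but not special Lagrangian for the fixed $\Omega$ unless $\theta\in\{0,\pi\}$; in general one computes $\mathrm{tr}\bigl(\Im h\,(\Re h)^{-1}\bigr)=\Im(\det h)/\det(\Re h)$, which vanishes only when $\det h\in\R$. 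Your rescaling of $\Omega$ by $\lambda^{-1}$ is precisely the move that kills the phase and collapses the two Grassmannians; without it the correspondence would be with $h\in\mathrm{SU}(2)$, not $h\in\mathrm{U}(2)$. The paper's own proof asserts that $\Im h\,(\Re h)^{-1}$ automatically satisfies $a=-d$ for all $h\in\mathrm{U}(2)$, which the example above shows is false, so the paper is tacitly making the same phase identification you make explicitly. Your argument is therefore correct provided ``special Lagrangian'' is read in the phase-free sense; under the literal fixed-$\Omega$ reading, neither proof goes through and the statement itself needs $\mathrm{U}(2)$ replaced by $\mathrm{SU}(2)$.
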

\begin{proof} This is done by direct calculation.
For a linear subspace  $V=\{y_1=ax_1+bx_2, y_2=cx_1+dx_2\}$, $\omega_0$ and $\Im (dz_1\wedge dz_2)$ to vanish on $V$ if and only if $a=-d,b=c$, in which case $\Re (dz_1\wedge dz_2)\mid V>0$ for a suitable orientation. On the other hand the leaves of possible horizontal foliations on $X$ are the images $\{(x_1,c_1,x_2,c_2):x_1,x_2\in \R\}h$, where $h\in \mathrm{U}(n)$, for $\det((h^{ij})_{i,j=1,3})\neq 0$, we may write this as a subspace where $\begin{pmatrix} a&b\\c&d\end{pmatrix}$ is given by $\Im h (\Re h)^{-1}$ so we have $a=-d,b=c$. Furthermore, any $\begin{pmatrix} a&b\\b&-a\end{pmatrix}$ can always be written as $\Im h (\Re h)^{-1}$ for some $h\in \mathrm{U}(n)$.

For $V$ that cannot be written in the form $\{y_1=ax_1+bx_2, y_2=cx_1+dx_2\}$, we simply observe that we may apply a unitary transformation to bring it to this form.
\end{proof}

A geodesic flow of Kummer surfaces starting at $\mathrm{Kum}(g)$ (with direction given by $g$) is defined as $Ra_t(\mathrm{Kum}(g),\omega_g)=(\mathrm{Kum}(ga_t),\omega_{ga_t}) $, i.e. bringing the Kummer surface generated by $g\in \mathrm{GL}(4,\R)$ to the one generated by $ga_t$. This is obtained by projecting the geodesic flow in the moduli space of complex torus down to the Kummer locus $\mathrm{K(Kum)}$.%explain this

We may apply our results on complex tori to Kummer surfaces to get similar connection between geodesic flow and ergodicity of foliations for Kummer surfaces.

\begin{proof}[Proof of Theorem \ref{masurkum} for Kummer surfaces]If the vertical or horizontal foliations on a torus is uniquely ergodic, then its image on the exceptional Kummer surface must be uniquely ergodic. If the geodesic flow in generalized K\"ahler moduli space of K3 surfaces is recurrent in compact subsets, then the pre-image in the K\"ahler moduli space of tori must also be recurrent. Thus by Theorem \ref{masurtorus}, assume the image of $\{Ra_t X: t\le 0\}$ in $\overline{\mathrm{Kah}}(M_0)$ is recurrent. Then the horizontal foliation must be uniquely ergodic.
\end{proof}

By the Torelli theorem and the period map, we may explicitly calculate the form of the geodesic flow.
\begin{prop}
The geodesic flow $Ra_t X$ of a Kummer surface $X$ in the period domain of degenerate K\"ahler forms $\mathrm{KPer}$ is of the form $(e^{-2t}\eta_1+e^{2t}\eta_2+\sqrt{-1}\eta_3,\omega)$ where $\eta_i,\omega\in H^2(X,\R)$ satisfy $\eta_i\in H^2(X,\R), \eta_i\cdot C_j=0, \omega\cdot C_j=0, 1\le i\le 3,1\le j\le 16$ and $(\eta_i)^2=0, \eta_i\cdot\eta_3=0$ for  $i=1,2$ and $2\eta_1\cdot \eta_2=(\eta_3)^2>0$, $\omega\cdot \eta_i=0,i=1,2,3$. 

Thus in the K\"ahler moduli space $\mathrm{\overline{Kah}}(M_0)$, the geodesic flows are of the form $[e^{-2t}\eta_1+e^{2t}\eta_2+\sqrt{-1}\eta_3,\omega]=\Gamma (span(e^{-2t}\eta_1+e^{2t}\eta_2,\eta_3,\omega)) $$\in \mathrm{\overline{Kah}}(M_0)\simeq \Gamma \backslash SO(3,19,\R) /(SO(2)\times SO(19))$.

%Conversely, for any $\eta_1,\eta_2,\eta_3,\omega$ satisfying the conditions above, there exists a Kummer surface $Y=\mathrm{Kum}(h)$ such that $Ra_t(\mathrm{Kum}(h),\omega_h)=(e^{-2t}\eta_1+e^{2t}\eta_2+\sqrt{-1}\eta_3,\omega)\in KPer$.
\end{prop}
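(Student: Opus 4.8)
The plan is to compute the period of $\mathrm{Kum}(ga_t)$ directly from the lattice $\Lambda(ga_t)$ and then extract the stated relations from the defining equations of $\mathrm{KPer}$. Recall that the holomorphic $2$-form on $\mathrm{Kum}(g)$ is the pullback of $dz_1\wedge dz_2$ under the birational contraction of the exceptional curves, so its class in $H^2(M_0,\C)$, read through the marking $\phi_g$, is computed exactly as for the torus $\C^2/\Lambda(g)$ by the $2\times 2$ minors of the complex period matrix \eqref{matrix}; by the propositions identifying the images $\mathrm{P(Kum)}$ and $\mathrm{K(Kum)}$, this class lies in $\mathrm{Per}(M_0)$, is orthogonal to every $C_j$, and together with $[\omega_g]$ defines a point of $\mathrm{KPer}$. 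First I would fix the marking $\phi_g$ and follow the trajectory $t\mapsto\Lambda(ga_t)$ through this minor construction to obtain the curve $t\mapsto\mathrm{Per}(Ra_tX)$ in $\mathrm{KPer}$.

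The core computation is to see how the minors transform. Right multiplication by $a_t=\mathrm{diag}(e^{-t},e^{t},e^{-t},e^{t})$ scales the real parts of the lattice vectors by $e^{-t}$ and the imaginary parts by $e^{t}$, so each complex coordinate $z$ of a lattice vector becomes $\cosh t\cdot z-\sinh t\cdot\bar z$. Writing $\pi_i^{(t)}$ for the $i$-th complex lattice vector of $\Lambda(ga_t)$ and expanding $\det(\pi_i^{(t)},\pi_j^{(t)})$ bilinearly, the double-angle identities together with the reality of the mixed expression $\det(\pi_i,\bar\pi_j)-\det(\pi_j,\bar\pi_i)$ give
\[
\det(\pi_i^{(t)},\pi_j^{(t)})=e^{-2t}\eta_1^{ij}+e^{2t}\eta_2^{ij}+\sqrt{-1}\,\eta_3^{ij}
\]
with all $\eta_k^{ij}$ real; assembling these over the pairs $(i,j)$ yields real classes $\eta_1,\eta_2,\eta_3\in H^2(M_0,\R)$ with $\mathrm{Per}(Ra_tX)=[e^{-2t}\eta_1+e^{2t}\eta_2+\sqrt{-1}\,\eta_3]$. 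For the K\"ahler part I would observe that $a_t$ is a symplectomorphism of $(\R^4,\omega_0)$, since a diagonal matrix of the indicated form satisfies $a_t^{T}Ja_t=J$; hence $[\omega_{ga_t}]$ is constant in $H^2(M_0,\R)$, and we call it $\omega$.

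All the asserted relations then fall out of the fact that $(\mathrm{Per}(Ra_tX),\omega)\in\mathrm{K(Kum)}\subset\mathrm{KPer}$ for every $t$. Substituting $\Omega(t)=e^{-2t}\eta_1+e^{2t}\eta_2+\sqrt{-1}\,\eta_3$ into $\Omega(t)\cdot\Omega(t)=0$, $\Omega(t)\cdot\overline{\Omega(t)}>0$, $\Omega(t)\cdot C_j=0$, $\Omega(t)\cdot\omega=0$, and $\omega\cdot C_j=0$, and comparing coefficients of the linearly independent functions $e^{4t},1,e^{-4t}$ (respectively $e^{2t},e^{-2t}$), one obtains $\eta_i^2=0$ and $\eta_i\cdot\eta_3=0$ for $i=1,2$, the identity $2\eta_1\cdot\eta_2=\eta_3^2$, and $\eta_i\cdot C_j=0$, $\eta_i\cdot\omega=0$ for all $i,j$; finally $\Omega(t)\cdot\overline{\Omega(t)}=2\eta_1\cdot\eta_2+\eta_3^2=2\eta_3^2$, so $\eta_3^2>0$. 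The homogeneous description follows by translating the pair $(\alpha,\omega)$ with $\alpha=e^{-2t}\eta_1+e^{2t}\eta_2+\sqrt{-1}\,\eta_3$ into the positive $3$-plane $\mathrm{span}(e^{-2t}\eta_1+e^{2t}\eta_2,\eta_3,\omega)$ with distinguished direction $\omega$ under $\overline{\mathrm{Kah}}(M_0)\simeq\Gamma\backslash SO(3,19,\R)/(SO(2)\times SO(19))$.

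The \textbf{main obstacle} is the bookkeeping in the second step: one must be careful about the left/right conventions, about the identification $\C^2\cong\R^4$ underlying \eqref{matrix}, and above all about checking that under the marking $\phi_{ga_t}$ the period of $\mathrm{Kum}(ga_t)$ is genuinely the displayed $t$-dependent combination of the \emph{fixed} classes $\eta_i$ (rather than absorbing an extra $t$-dependent change of marking). Once this minor computation is pinned down, the remaining relations are purely formal consequences of membership in the period domain.
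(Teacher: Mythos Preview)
Your proposal is correct and follows essentially the same route as the paper: compute the period of $\mathrm{Kum}(ga_t)$ via the $2\times 2$ minors of the complex period matrix, check that the K\"ahler class is $t$-invariant, and then read off all the quadratic relations from the identity $(\Omega(t),\omega)\in\mathrm{K(Kum)}\subset\mathrm{KPer}$ for every $t$. The paper does the minor expansion by writing each entry as $e^{-t}\Re\lambda^i_k+ie^{t}\Im\lambda^i_k$ and multiplying out directly (and records explicit formulas for $\eta_k^{ij}$), whereas you package the same computation via $z\mapsto\cosh t\,z-\sinh t\,\bar z$ and bilinearity; likewise the paper verifies the invariance of $[\omega_g]$ by a line of arithmetic while you note once that $a_t$ is a linear symplectomorphism of $(\R^4,\omega_0)$ --- both are cosmetic variations of the same argument, and your awareness of the marking issue is exactly the point the paper handles by fixing the basis $T_{ij},C_k$ under $\phi_{ga_t}$.
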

\begin{proof}
Suppose $X=\mathrm{Kum}(g)$, first we calculate the periods of $Ra_t X$ using the markings $\phi_{ga_t}$ and the basis given as follows.

Suppose $g=\begin{pmatrix} \lambda^1 \\ \lambda^2\\ \lambda^3\\ \lambda^4\end{pmatrix}=\begin{pmatrix} \lambda^1_1 & \lambda^1_2 \\ \lambda^2_1 &\lambda^2_2 \\ \lambda_1^3 & \lambda_2^3\\ \lambda_1^4 & \lambda_2^4\end{pmatrix} $ where $\lambda^i_j\in \C$. Take $T_{ij}$ to be the torus in the Kummer surface given by  the image of the torus in $\C^2/\Lambda(g)$ spanned by $\lambda^i$ and $\lambda^j$. Then $T_{ij},C_k$ makes up a basis for $H_2(X,\Q)$
and $T_{ij}$ is a basis for $(\{C_j\})^\perp\subset H_2(X,\Z)$. 
% %% 

Under this basis and the corresponding marking, we have $\mathrm{Per}(\mathrm{Kum}(g))=[\int_{T_{ij}}dz^1\wedge dz^2,\overbrace{0,...,0}^{16\text{ zeros }}]$. So by direct calculation, we have $$\mathrm{Per}( Ra_t(X))=\mathrm{Per}(\mathrm{Kum}\begin{pmatrix} 
e^{-t} \Re\lambda^1_1 & e^t \Im\lambda^1_1& e^{-t} \Re \lambda^1_2  & e^{t} \Im \lambda^1_2 \\ 
e^{-t} \Re\lambda^2_1 & e^{t} \Im\lambda^2_1 & e^{-t} \Re\lambda^2_2 & e^{t} \Im\lambda^2_2 \\ 
e^{-t} \Re\lambda_1^3 & e^{t} \Im\lambda_1^3 & e^{-t} \Re\lambda_2^3 & e^{t} \Im\lambda_2^3\\ 
e^{-t} \Re\lambda_1^4  & e^{t} \Im\lambda_1^4 & e^{-t} \Re\lambda_2^4& e^{t} \Im\lambda_2^4 \end{pmatrix})$$
$$=[(e^{-t} \Re\lambda^i_1+\sqrt{-1}e^{t}\Im \lambda^i_1)(e^{-t} \Re\lambda^j_2+\sqrt{-1}e^{t}\Im\lambda^j_2)$$$$-(e^{-t} \Re\lambda^i_2+\sqrt{-1}e^{t}\Im\lambda^i_2)(e^{-t} \Re\lambda^j_1+\sqrt{-1}e^{t}\Im\lambda^j_1),0,...,0]$$
$$=[e^{-2t}\eta_1^{ij}+e^{2t}\eta_2^{ij}+\sqrt{-1}\eta_3^{ij},0,...,0]$$

where $$\eta_1^{ij}=\Re \lambda^i_1\Re\lambda^j_2-\Re\lambda^i_2\Re\lambda^j_1,$$ $$\eta_2^{ij}=\Im \lambda^i_1\Im\lambda^j_2-\Im\lambda^i_2\Im\lambda^j_1,$$ $$\eta_3^{ij}=\Re \lambda^i_1\Im\lambda^j_2-\Re\lambda^i_2\Im\lambda^j_1+\Im \lambda^i_1\Re\lambda^j_2-\Im\lambda^i_2\Re\lambda^j_1.$$

For the K\"ahler form $\omega_g$, we have under these coordinates $\mathrm{Per}(\phi^*_{ga_t}(\omega_g))=[\int_{T_{ij}}dx_1\wedge dy_1+dx_2\wedge dy_2,\overbrace{0,...,0}^{16\text{ zeroes}}]$ where $\int_{T_{ij}}dx_1\wedge dy_1+dx_2\wedge dy_2$ on $\mathrm{Kum}(ga_t)$ is $\int_{T_{ij}}dx_1\wedge dy_1+dx_2\wedge dy_2=e^{-t} \Re\lambda^i_1 e^{t} \Im \lambda^j_1-e^{-t} \Re\lambda^j_1 e^{t} \Im \lambda^i_1+e^{-t} \Re\lambda^i_2 e^{t} \Im \lambda^j_2-e^{-t} \Re\lambda^j_2 e^{t} \Im \lambda^i_2= \Re\lambda^i_1  \Im \lambda^j_1- \Re\lambda^j_1  \Im \lambda^i_1+\Re\lambda^i_2  \Im \lambda^j_2- \Re\lambda^j_2  \Im \lambda^i_2$ which is invariant under $Ra_t$. Denote this 2-form as $\omega$ (which is equal to $[\omega_g]$).

So take $\eta_k=\sum\eta_k^{ij}T_{ij}^*$, we have $\mathrm{Per}(Ra_t(X))=e^{-2t}\eta_1+e^{2t}\eta_2+\sqrt{-1}\eta_3$. 

Next, we check the conditions they satisfy. We may check that $(\eta_i)^2=0, \eta_i\cdot \eta_3=0$ for  $i=1,2$ and $2\eta_1\cdot \eta_2=(\eta_3)^2>0$, $\omega\cdot \eta_i=0,i=1,2,3$ either by direct calculation, or by the fact that $(e^{2t}\eta_1+e^{-2t}\eta_2+\sqrt{-1}\eta_3,\omega)\in \mathrm{K(Kum)}$ for any $t$.

%inally, we show that any such form can be given by a geodesic flow. %????????
\end{proof}

Descending from the case of tori, for Kummer surfaces, the geodesic flow we defined acts on exceptional Kummer surfaces exactly by expanding the horizontal foliation and contracting the vertical foliations. Although we defined the geodesic flow as given by a ``direction" from $g\in \mathrm{SL}(2n,\R)$, we see in the calculation that the geodesic flow itself only depends on the directions of $\mathcal{F}^v$ and $\mathcal{F}^h$, and we may view $\eta_1$ and $\eta_2$ as invariants representing those foliations. 

%In fact, here we may choose two currents? in the class of $\eta_1$ and $\eta_2$ so that the ...???\begin{prop} For $\eta\in H^2(X,\R): \eta.\eta=0,\eta.C_j=0,1\le j\le 16$,\end{prop}
%explain how g is determined by F^v,F^h

We may also apply Corollary \ref{unierg} to get a necessary condition for horizontal foliations to be uniquely ergodic: it is uniquely ergodic only if $rank(\eta_2^\perp \cap H^2(X,\Z))< 19$.

\begin{proof}[Proof of Theorem 1.3]
The horizontal foliation $\mathcal{F}^h$ in the Kummer surface $X$ outside the exceptional curves is unique ergodic if and only if its pre-image in the torus $Y=\C^2/\Lambda(g)$ is uniquely ergodic, which happens if and only if its leaves are not contained in the real span of any sub-lattice of $g=\begin{pmatrix} \lambda^1 \\ \lambda^2\\ \lambda^3\\ \lambda^4\end{pmatrix}$. We next prove by contradiction. We show that if a leaf of the horizontal foliation is in the real span of a sub-lattice of $\Lambda$, then $rank(\eta_2^\perp \cap H^2(X,\Z))\ge 19.$

Assume that a leaf of the horizontal foliation is in the real span of a sub-lattice of $\Lambda$. 

We first assume that it is in the real span of $\lambda^1,\lambda^2,\lambda^3$. We claim that this happens if and only if $\Im \lambda^1,\Im \lambda^2,\Im \lambda^3$ are collinear over $\R$. Indeed, if $\Im \lambda^1,\Im \lambda^2,\Im \lambda^3$ are collinear, then $\Re \lambda^1,\Re \lambda^2,\Re \lambda^3$ cannot be collinear, so the 2-plane $\{y_1=y_2=0\}$ is in the real span of $\lambda^1,\lambda^2$ and $\lambda^3$. Conversely, if $\Im \lambda^1,\Im \lambda^2,\Im \lambda^3$ are not collinear, then $span(\lambda^1,\lambda^2,\lambda^3)\cap \{y_1=y_2=0\}$ must be 1-dimensional, which contradicts to a leaf of the horizontal foliation being in the real span $span(\lambda^1,\lambda^2,\lambda^3)$.

Now if we only have that a leaf of the horizontal foliation is in the real span of a sub-lattice of $\Lambda$, then we have $\Im \lambda^1,\Im \lambda^2,\Im \lambda^3$ are mutually linear after a left multiplication of $\mathrm{SL}(4,\Z)$ on $\Lambda$. So using the basis $T_{ik}, 1\le i<k\le 4, C_j, 1\le j\le 16 $ for $H^2(X,\Q)$, $\eta_2$ has 19 zero entries after a left multiplication of $SO(3,3,\Z)\times I_{16}$ on $\eta_2$. Thus we have  $rank(\eta_2^\perp \cap H^2(X,\Z))\ge 19$.
\end{proof}

We remark that the converse is not true. 
For example consider $$g=\begin{pmatrix} 1&1&0&0\\
1&\sqrt{2}&0 & 0\\ 0&-\sqrt{3}&1&0\\ 0&0&1&1\end{pmatrix}\text{ and } h=\begin{pmatrix} 1&0&0&0\\
1&1&0&0\\0&0&1&1\\0&\sqrt{3}&1&\sqrt{2}\end{pmatrix}.$$ Then the horizontal foliations on $\mathrm{Kum}(g)$ is uniquely ergodic while the horizontal foliations on $\mathrm{Kum}(h)$ is not. However, we may calculate that $\eta_2(g)$ and $\eta_2(h)$ differ only by an element in $O(H^2(M_0,\Z))$, so they are the same elements on the boundary of the complex moduli space. 
\bibliography{masurdiv}
\bibliographystyle{alpha}
\end{document}